\DeclareFontFamily{U}{matha}{\hyphenchar\font45}
\DeclareFontShape{U}{matha}{m}{n}{
      <5> <6> <7> <8> <9> <10> gen * matha
      <10.95> matha10 <12> <14.4> <17.28> <20.74> <24.88> matha12
      }{}
\DeclareSymbolFont{matha}{U}{matha}{m}{n}
\DeclareFontFamily{U}{mathx}{\hyphenchar\font45}
\DeclareFontShape{U}{mathx}{m}{n}{
      <5> <6> <7> <8> <9> <10>
      <10.95> <12> <14.4> <17.28> <20.74> <24.88>
      mathx10
      }{}
\DeclareSymbolFont{mathx}{U}{mathx}{m}{n}
\DeclareMathSymbol{\obot}         {2}{matha}{"6B}
\DeclareMathSymbol{\bigobot}       {1}{mathx}{"CB}
\newcommand{\figref}[1]{Fig.~\ref{#1}}
\renewcommand{\eqref}[1]{equation~(\ref{#1})}
\newcommand{\CC}{\mathbb C}
\newcommand{\reg}{\mathop{\rm reg}}
\newcommand{\interior}{\mathop{\rm int}}
\newcommand{\re}{\mathop{\rm Re}}
\newcommand{\arcsinh}{\mathop{\rm arcsinh}}
\numberwithin{equation}{section}  
\newtheoremstyle{style1}
  {5pt}
  {5pt}
  {\it}
  {}
  {\bf\scshape}
  {.}
  {5pt}
  {}
\newtheoremstyle{style2}
  {3pt}
  {3pt}
  {}
  {}
  {\bf}
  {:}
  {3pt}
  {}
\theoremstyle{style1}
\newtheorem{definition}{Definition}
\newtheorem*{definition*}{Definition}
\newtheorem{theorem}{Theorem}
\newtheorem{proposition}{Proposition}
\theoremstyle{style2}
\newtheorem*{example}{Example}
\def\expandafter\normalsize\expandafter{%
    \normalsize
    \setlength\abovedisplayskip{10pt}
    \setlength\belowdisplayskip{10pt}
    \setlength\abovedisplayshortskip{10pt}
    \setlength\belowdisplayshortskip{10pt}
}
\begin{document}

\title[A Weierstrass Representation for 2D Elasticity]{A Weierstrass Representation for 2D Elasticity}

\author{Ulrich Pinkall}
\author{Jonas Tervooren}

\address{Sekr. MA 8-1\\
  Technische Universit\"at Berlin\\Institut f\"ur Mathematik\\
  Stra{\ss}e des 17.\ Juni 136\\
  10623 Berlin\\ Germany}

\email{pinkall@math.tu-berlin.de, tervooren@math.tu-berlin.de}

\date{\today}

\begin{abstract}
We study a class of elastic energy functionals for maps between planar domains
(among them the so-called {\em squared distance functional}) whose critical points
({\em elastic maps}) allow a far more complete theory than one would expect
from general elasticity theory. For some of these functionals elastic maps even
admit a ``Weierstrass representation'' in terms of holomorphic functions,
reminiscent of the one for minimal surfaces. We also prove a global uniqueness
theorem that does not seem to be known in other situations.\\[0.5cm]
{\bf Keywords:} Elasticity, Integrable Systems, Weierstrass representation
\end{abstract}

\thanks{Work supported by Berlin Mathematical School and SFB Transregio 109. We thank SideFX Software for Houdini licenses.}

\maketitle

\section{Introduction}
\label{intro}
The theory of elastic equilibrium goes all the way back to Bernoulli and Euler \cite{todhunter}. Its basic concern are the critical points of an elastic energy of the form
\[f\mapsto E(f)=\int_M W(f')\]
where $M\subset \mathbb{R}^n$ is a bounded domain and $f:M\to \mathbb{R}^n$ is a smooth map. If the material to be modelled is homogeneous and isotropic (i.e. at each point it has the same properties and no preferred direction) then the function $W:\mathbb{R}^{n\times n}\to \mathbb{R}\cup \infty$ is supposed to satisfy
\[W(RAS)=W(A).\]
for all $R,S\in \mbox{SO}(n)$. $W(f')$ describes the energetic response of the material to the failure of $f'$ to be an orientation-preserving orthogonal map. So $W(A)$ should be non-negative and assume its absolute minimum zero on $\mbox{SO}(n)$. Recent studies \cite{rigidity} have found it useful to make sure that this minimum is non-degenerate in the sense that there are a constants $\delta, C>0$ such that for all $A\in \mathbb{R}^{n\times n}$ with $d(A,\mbox{SO}(n))\leq\delta$ we have
\[W(A)\geq C\cdot d(A,\mbox{SO}(n))^2.\]
Here $d$ denotes the euclidean distance in the space of matrices endowed with the Frobenius norm. In fact, the choice
\[W_d(A):= \frac{1}{2} d(A,\mbox{SO}(n))^2\]
itself yields a valid elastic energy, called the {\em distance-squared energy} $E_d$, which has been successfully applied to elasticity simulations in the context of Computer Graphics \cite{chao}. The most classical choice (which yields the Saint Venant-Kirchhoff energy) is
\[W_{sv}(A):=|\!|A^tA-I|\!|^2.\]
In order to obtain existence and uniqueness results concerning minimizers of the elastic energy one usually has to specify suitable boundary conditions. So far such results only have been found for maps $f$ that are close to the the identity. Little is known about equilibria in the case of ``large deformations''. Moreover, only for very special elastic equilibria $f$ explicit formulas are available.

Let us compare this to another classical variational problem: Given a compact domain $M\subset \mathbb{R}^2$, look for smooth maps $f:M\to\mathbb{R}^3$ (subject to suitable boundary conditions) that are {\em minimal surfaces}, i.e. critical points for the area functional. To eliminate the freedom of repararametrization one usually restricts attention to conformal immersions $f$. Here the situation is quite different: There is an abundance of global classification results and {\em all} such $f$ can be explicitly expressed in terms of two holomorphic functions $g,h:M\to \mathbb{C}$:
\[f(z)=\mbox{Re} \int h(1-g^2,i(1+g^2),2g).\]
In this paper we will show that in two dimensions elastic equilibria based on the distance-squared energy admit a very similar Weierstrass representation in terms of two holomorphic functions. In fact, we will exhibit such a representation for a whole one-parameter family $E_\lambda$ of elastic energies that contains the distance-squared energy $E_d$ for $\lambda=1$.

Most of the theory we are going to develop applies to an even larger class of energies $E_V$ that depend on a certain convex function $V$ of one variable. We will prove the following global uniqueness result: Let $M\subset \mathbb{R}^2$ be a simply connected open domain and $f:M\to \mathbb{R}^2$ a critical point of $E_V$ (with respect to arbitrary variations) that is stable in a sense that we will make precise. Then $f$ is a rigid motion.

There are counterexamples if the dimension of $M$ is at least three or if $M$ is not simply connected.

\section{Elastic energies in the planar case} 
 \label{elastic energy}

The euclidean space of all real $2 \times 2$ matrices splits
as an orthogonal direct sum
\begin{equation}
gl(2,\mathbb{R}) = \mathbb{C} \oplus \mathbb{C}^\perp. \label{splitting}
\end{equation}
Here $\mathbb{C}$ consists of the orientation preserving conformal
endomorphisms of $\mathbb{R}^2$ and $\mathbb{C}^\perp$ of the orientation
reversing ones, i.e. elements of $\mathbb{C}$ are complex linear and elements of $\mathbb{C}^\perp$ are complex anti-linear.
In this notation $SO(2)$ is just the unit circle in $\mathbb{C}$.
For a smooth map $f: M \to \mathbb{R}^2$, the splitting (\ref{splitting})
reflects in the decomposition of the differential $df$ (viewed as an
$\mathbb{R}^2$-valued 1-form) as
\begin{equation}
\label{splitting df}
df = f_{z} dz + f_{\bar{z}} d\bar{z},
\end{equation}
where subscripts denote partial derivatives and
\begin{equation*}
f_{z} = (f_x - i f_y)/2 \quad  \textnormal{ and } \quad  f_{\bar{z}} = (f_x + i f_y)/2.
\end{equation*}
In this notation the volume form on $\mathbb{R}^2$ is
\begin{equation*}
dx\wedge dy = \frac{i}{2} dz \wedge d\bar{z}.
\end{equation*}
\begin{proposition}
The distance-squared energy of a smooth map $f:M\rightarrow \mathbb{R}^2$ is given by
\begin{equation}
E(f) = \frac{1}{2} \int_M (|f_{z}|-1)^2 + |f_{\bar{z}}|^2. \label{squaredDistance}
\end{equation}
\end{proposition}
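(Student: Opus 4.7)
The plan is to unwind the definition of the squared-distance energy using the orthogonal decomposition (\ref{splitting}) and the corresponding expression (\ref{splitting df}) for $df$; the whole computation reduces to a Pythagorean argument in $gl(2,\mathbb{R})$.

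First, at each point of $M$ the Jacobian $f'$ sits in $gl(2,\mathbb{R}) = \mathbb{C} \oplus \mathbb{C}^{\perp}$, and (\ref{splitting df}) tells me that its $\mathbb{C}$-component is $f_z$ (acting by complex multiplication) while its $\mathbb{C}^{\perp}$-component is $f_{\bar{z}}$ (acting complex-antilinearly). Since $SO(2)$ lies entirely in the summand $\mathbb{C}$ as its unit circle, the orthogonality of the splitting gives, for every $R \in SO(2)$,
\[
\|f' - R\|^2 \;=\; \|f_z - R\|^2 + \|f_{\bar{z}}\|^2.
\]

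Next, I would minimize over $R \in SO(2)$. The first summand is $|f_z - R|^2$ in the complex-modulus sense, once the inner product on the summand $\mathbb{C}$ is normalized so that $SO(2)$ really is the unit circle there; its minimum over $|R|=1$ is $(|f_z|-1)^2$, attained at $R = f_z/|f_z|$ when $f_z \neq 0$ (and trivially realized when $f_z = 0$). The second summand is $R$-independent and equals $|f_{\bar{z}}|^2$. Hence
\[
d\bigl(f', SO(2)\bigr)^2 \;=\; (|f_z| - 1)^2 + |f_{\bar{z}}|^2.
\]
Multiplying by $\tfrac{1}{2}$ in accordance with the definition $W_d(A) = \tfrac{1}{2}\,d(A,SO(2))^2$ and integrating against the Euclidean volume form on $M$ yields the claimed formula (\ref{squaredDistance}).

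The only real obstacle is keeping the normalization straight: the ambient inner product on $gl(2,\mathbb{R})$ must be the one under which $a \mapsto$ (multiplication by $a$) embeds $\mathbb{C}$ isometrically into $gl(2,\mathbb{R})$, so that calling $SO(2)$ ``the unit circle in $\mathbb{C}$'' is literally accurate and matches the factor $\tfrac{1}{2}$ appearing in front of the integral. With that bookkeeping fixed, the rest is elementary planar geometry of the unit circle together with the orthogonal direct sum (\ref{splitting}).
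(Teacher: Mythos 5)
Your argument is correct and follows the same route as the paper: decompose $df$ via the orthogonal splitting $gl(2,\mathbb{R})=\mathbb{C}\oplus\mathbb{C}^{\perp}$, note that only the $\mathbb{C}$-component is affected by subtracting $R\in SO(2)$, and minimize $|f_z-R|^2$ over the unit circle to get $(|f_z|-1)^2$. Your closing remark about normalizing the inner product so that $\mathbb{C}$ embeds isometrically is a reasonable point of care that the paper passes over silently, but it does not change the argument.
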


\begin{proof}
\begin{align*}
E\left(f\right) &= \frac{1}{2} \int_M d(df,SO(2))^2  \\
&=\frac{1}{2} \int_M \min_{B\in SO(2)} \| \underbrace{f_zdz-B}_{\in \mathbb{C}} + \underbrace{f_{\bar{z}}d\bar{z}}_{\in \mathbb{C}^\perp} \|^2\\
&= \frac{1}{2} \int_M \min_{B\in SO(2)} \| f_zdz-B \|^2 + \|f_{\bar{z}}d\bar{z} \|^2\\
&= \frac{1}{2} \int_M \left(\left| f_z \right|-1\right)^2 + \left|f_{\bar{z}}\right|^2. 
\end{align*}
\end{proof}

We will also investigate a modified version of the distance-squared energy $E(f)$ given by
\begin{equation}
E_V(f) := \frac{1}{2}\int_M V(|f_{z}|^2) + |f_{\bar{z}}|^2, \label{EV}
\end{equation}
where $V:\left(0,\infty\right)\rightarrow \mathbb{R}$ is a smooth function with the following properties:
\begin{enumerate}
\item $V$ is strictly convex.
\item $V$ takes its only minimum zero at $x=1$.
\item $V'(x)\sqrt{x} \in O(1)$ near $x=0.$
\end{enumerate}
The simplest case just inserts a constant $\lambda > 0$ in $E(f)$:
\begin{equation}
E_\lambda(f) = \frac{1}{2} \int_M \lambda(|f_{z}|-1)^2 + |f_{\bar{z}}|^2. \label{Elambda}
\end{equation}
The crucial property of $E_V$ is the fact that one can add to $E_V$ one half of the
oriented area of $f(M)$ to obtain an expression that depends on $f_z$ only:
\begin{align}
E_V+ \frac{1}{2} \mbox{area}(f(M)) &= \frac{1}{2} \int_M V(|f_{z}|^2) + |f_{\bar{z}}|^2 + \det(df) \nonumber \\ 
&= \frac{1}{2} \int_M V(|f_{z}|^2) + |f_z|^2.
\label{E+A}
\end{align}
Here we have used
\begin{equation*}
\det(df)=|f_z|^2-|f_{\bar{z}}|^2.
\end{equation*}
Since the area of $f(M)$ is unaffected by variations of $f$ supported in the interior of $M$, this modification of $E_V$ neither changes the Euler-Lagrange equations nor the stability properties with respect to variations with fixed boundary. As a consequence, the property of being a critical point of $E_V$ is invariant under the addition of antiholomorphic functions.

\section{Euler-Lagrange equations for $E_V$}
\label{EL}
Let $M \subset \mathbb{R}^2$ be a domain with piecewise smooth boundary. The class of maps $f: M \to \mathbb{R}^2$ we are most interested
in are smooth orientation preserving immersions. On the other hand, when pushed far enough from the resting state by the boundary conditions, we will see that elastic maps have the tendency to develop branch points (\figref{branchpoint}). Accordingly, we include a larger class of maps:

\begin{definition}
A map $f: M \to \mathbb{R}^2$ is called almost smooth if it is Lipschitz
and smooth away from finitely many points.
\end{definition}

Almost smooth maps form a vector space. Moreover, since the derivative of an
almost smooth map $f$ is bounded, $E_V(f)$ is well-defined. As a consequence of (\ref{E+A}) we saw that adding to $f$ an antiholomorphic map preserves the Euler-Lagrange equations for the
energy $E_V$. Asking our maps to be orientation preserving immersions away from
finitely many points would break this natural symmetry. We therefore weaken this
condition in a way that effectively says that (away from finitely many points)
locally $f$ becomes an orientation preserving immersion after adding a suitable
antiholomorphic function:

\begin{definition}
A Lipschitz map $f: M \to \mathbb{R}^2$ is called almost immersed if on the
complement of finitely many points $\{p_1, \ldots ,p_n\}$ (called the regular
part $\reg(f)$ of $f$) it is smooth and $f_z$ does not vanish.
\end{definition}

Indeed, for every point $p \in \reg(f)$ there exists an open neighbourhood $U \subset \reg(f)$ on which we can define the antiholomorphic function $k:U\rightarrow\mathbb{R}^2$,
\begin{equation}
\label{k}
 k(z):= -\bar{z} f_{\bar{z}}(p),
\end{equation}
such that, $\tilde{f}:= f + k$ is an orientation preserving immersion on $U$.\\

To derive the Euler-Lagrange equations for $E_V$ our strategy is as follows: First we
obtain a necessary condition for being a critical point by allowing only a
restricted class of variations that do not move the branch points. We will see that maps satisfying this necessary condition fall into two categories: The first consists of global minima among almost immersed maps with the same boundary values. So in particular maps in this category are certainly honest critical points. The second consists of maps $f$ that are unstable even locally. This means that our Euler-Lagrange equation (even though derived based on a restricted class of variations) captures at least all the local minima of the energy.

\begin{definition}
An almost immersed map $f: M \to \mathbb{R}^2$ is called a weak critical point
of $E_V$ if $\dot{E}_V=0$ for all smooth variations $\dot{f}$ compactly
supported in $\reg(f) \cap \interior(M)$.
\end{definition}

\begin{proposition}
\label{eulerV}
An almost immersed map $f: M \to \mathbb{R}^2$ is a weak critical point of $E_V$
if and only if the function
\begin{equation}
\label{eqn:g}
g := (1+ V'(|f_z|^2)\,f_z
\end{equation}
defined on $\reg(f)$ is holomorphic. If this is the case, $g$ will always extend to a holomorphic function on the whole of $\interior(M)$.
\end{proposition}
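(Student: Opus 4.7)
My plan has three steps: reduce to an equivalent functional whose integrand depends only on $f_z$, derive a Cauchy--Riemann-type equation from the first variation via integration by parts, and then use Riemann's removable singularity theorem to extend $g$ across the exceptional points.

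By \eqref{E+A}, the functional $E_V$ differs from $\widetilde E_V(f) := \tfrac{1}{2}\int_M V(|f_z|^2) + |f_z|^2$ only by half the oriented area of $f(M)$, which is unaffected by variations compactly supported in $\interior(M)$. Hence $f$ is a weak critical point of $E_V$ iff it is a weak critical point of $\widetilde E_V$. For a smooth $\dot f$ compactly supported in $\reg(f)\cap\interior(M)$, the identity $\tfrac{d}{dt}\big|_0 |f_z + t\dot f_z|^2 = 2\,\re(\overline{f_z}\dot f_z)$ yields
\[
\dot{\widetilde E}_V \;=\; \int_M (1+V'(|f_z|^2))\,\re(\overline{f_z}\dot f_z) \;=\; \int_M \re(\bar g\,\dot f_z).
\]
Replacing $\dot f$ by $i\dot f$ shows that this vanishes for every admissible $\dot f$ iff $\int_M \bar g\,\dot f_z = 0$ for all complex-valued compactly supported $\dot f$ on $\reg(f)\cap\interior(M)$. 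A standard integration by parts (no boundary contribution, since $\dot f$ is compactly supported in $\reg(f)$, where $g$ is smooth) rewrites this as $\int_M \dot f\,\partial_z\bar g = 0$, and the fundamental lemma gives $\partial_z \bar g = 0$; equivalently $g$ is holomorphic on $\reg(f)$. The converse is immediate by reading the same chain backwards.

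For the extension to all of $\interior(M)$: the Lipschitz property of $f$ bounds $|f_z|$, while hypothesis (3) on $V$ provides a uniform bound on $V'(|f_z|^2)\,|f_z| = V'(|f_z|^2)\sqrt{|f_z|^2}$ as $f_z \to 0$. Combined, $|g| \le |f_z| + V'(|f_z|^2)\,|f_z|$ is bounded on $\reg(f)\cap\interior(M)$. So on a punctured neighborhood of each of the finitely many exceptional points in $\interior(M)\setminus\reg(f)$, $g$ is bounded and holomorphic, and Riemann's removable singularity theorem produces the desired holomorphic extension. The main subtlety is recognizing that hypothesis (3) on $V$ has been engineered precisely to control $g$ near potential zeros of $f_z$; otherwise the argument is a routine application of the fundamental lemma of the calculus of variations.
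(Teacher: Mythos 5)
Your proof is correct and follows essentially the same route as the paper's: a first-variation computation, integration by parts against variations compactly supported in $\reg(f)\cap\interior(M)$ to obtain $g_{\bar z}=0$, and then boundedness of $g$ (from the Lipschitz bound on $f_z$ together with hypothesis (3) on $V$) plus Riemann's removable singularity theorem to extend across the exceptional points. The only organizational difference is that you invoke \eqref{E+A} to eliminate the $|f_{\bar z}|^2$ term before varying, whereas the paper keeps it and works with the stress form $\sigma$ of \eqref{sigma}, identifying $g\,dz = df - i\sigma$ as a closed form --- the two computations are equivalent.
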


\begin{proof}
Let us define the vector-valued 1-form
\begin{equation}
\label{sigma}
\sigma:=i\left( -V'\left(\left|f_z\right|^2\right)f_z dz + f_{\bar{z}}d\bar{z}\right).
\end{equation}
In the language of continuum mechanics \cite{marsden,sifakis} $\sigma$ can be described as the hodge-dual of the first Piola-Kirchhoff stress tensor.

Let $f$ and $\dot{f}$ be given as above. We compute the corresponding variation
of the energy:

\begin{align}
	\dot{E}_V(f) &= \left. \frac{d}{dt}\right|_{t=0}\frac{1}{2}\int_M \left( V\left(\left|f_z\right|^2\right) + \left|f_{\bar{z}}\right|^2\right)  \frac{i}{2} dz \wedge d\bar{z}\\ \nonumber
		&= \int_M \left( V'\left(\left|f_z\right|^2\right)\left\langle \dot{f}_z,f_z\right\rangle + \left\langle \dot{f}_{\bar{z}},f_{\bar{z}}\right\rangle\right) \frac{i}{2} dz \wedge d\bar{z}\\ \nonumber
		&= \frac{1}{2}Re \int_M \left( V'\left(\left|f_z\right|^2\right)\overline{\dot{f}_{z}}f_{z} + \overline{\dot{f}_{\bar{z}}}f_{\bar{z}}\right) i dz \wedge d\bar{z}\\ \nonumber
	&=\frac{1}{2} Re \int_M i\left( -V'\left(\left|f_z\right|^2\right)\overline{\dot{f}_{z}}d\bar{z}\wedge f_z dz + \overline{\dot{f}_{\bar{z}}}dz \wedge f_{\bar{z}}d\bar{z}\right)\\ \nonumber
 &= \frac{1}{2}Re\int_M \left( \overline{\dot{f}_{\bar{z}}}dz + \overline{\dot{f}_{z}}d\bar{z} \right) \wedge i\left( -V'\left(\left|f_z\right|^2\right)f_z dz + f_{\bar{z}}d\bar{z}\right)\\ \nonumber
		&= - \frac{1}{2} Re \int_M \overline{d\dot{f}}\wedge \sigma \\    \nonumber
		&= -\frac{1}{2} \int_M \langle d\dot{f} \wedge \sigma \rangle \\   \nonumber
		&=  \frac{1}{2} \int_M \langle \dot{f},d\sigma \rangle -  \frac{1}{2} \int_M d\langle \dot{f},\sigma \rangle. \label{comp}
\end{align}
$\sigma$ and $\dot{f}$ are smooth on $\reg(M)$ and $\dot{f}$ is compactly supported in $\reg(M) \cap \interior(M)$. Therefore, we can use Stokes theorem to obtain:
\begin{equation}
\dot{E}_V  = \frac{1}{2} \int_M \langle \dot f, d\sigma \rangle - \frac{1}{2}\int_{\partial M} \langle \dot f , \sigma \rangle = \frac{1}{2} \int_M \langle \dot f, d\sigma \rangle. \label{Edot1}
\end{equation}
As a consequence, $f$ is a weak critical point of $E_V$ if and only if $\sigma$ is a closed form on $\reg(f) \cap \interior(M)$. That is why, for a weak critical point $f$ also $(df - i\sigma) = (1+ V'(|f_z|^2)\,f_z dz = g\, dz$ is a closed 1-form and thus $g$ is a holomorphic function on $\reg(f) \cap \interior(M)$. On the other hand, $V'(x)\sqrt{x} \in O(1)$ and $f$ is Lipschitz and therefore $g$ is bounded and extends to a holomorphic function on the whole of $\interior(M)$.
\end{proof}

\begin{figure}[h]
\includegraphics[width=\textwidth]{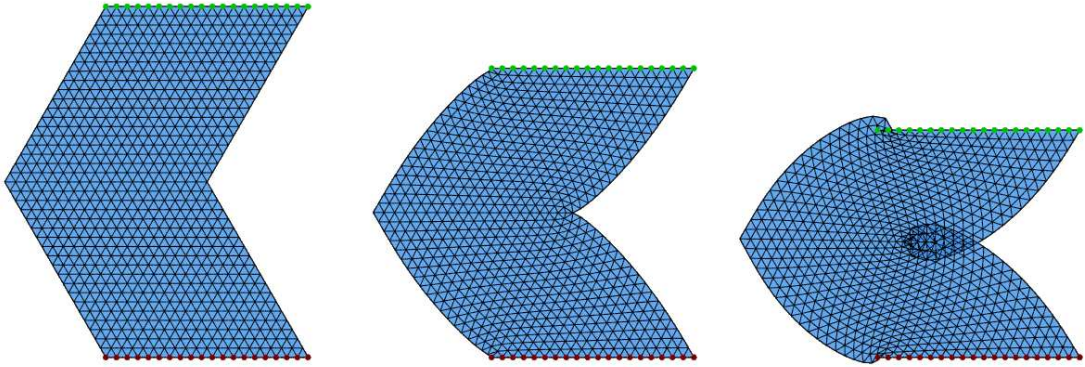}
\caption{A triangulated domain (left) is deformed elastically. During the deformation the points at the bottom are fixed while the points at the top are moved to prescribed positions. After a small elastic deformation the domain stays embedded (middle) and after a bigger deformation branch points appear (right). The deformations were computed numerically and visualized using Houdini.}
\label{branchpoint}
\end{figure}

\begin{definition}
We say that a map $f$ satisfies the {\em Euler-Lagrange equations} for $E_V$ if it is a weak critical point, i.e. if the function $g$ defined in \eqref{eqn:g} is holomorphic.
\end{definition}

\begin{definition}
An almost smooth map $f$ is called a {\em minimizer with fixed boundary} for $E_V$ if
\begin{equation*} \label{minimizer}
E_V(f) \leq E_V(\tilde{f})
\end{equation*}
for all almost smooth maps $\tilde{f}$ whose restriction to $\partial M$ is the
same as that of $f$. It is called a {\em strict minimizer} if (\ref{minimizer})
holds with strict inequality.
\end{definition}

\begin{definition}
An almost smooth map $f$ is called {\em locally unstable} for $E_V$ if there is
a point $p \in \interior(M)$ such that for every neighbourhood $U$ of $p$ there
are variations of $f$ supported in $U$ that bring down the energy $E_V$.
\end{definition}

\begin{proposition}
\label{prop minimizer}
An almost smooth map $f: M \to \mathbb{R}^2$ is a minimizer for $E_V$ with
fixed boundary if and only if it is a weak critical point for $E_V$ and
\begin{equation}
1+V'(|f_z|^2) \geq 0. \label{stability}
\end{equation}
If for a weak critical point (\ref{stability}) holds and the left
hand side does not vanish identically then $f$ is a strict minimizer. Weak critical points that are not
minimizers are locally unstable.
\end{proposition}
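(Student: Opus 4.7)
The plan is to reduce $E_V$ to a functional depending only on $f_z$ by exploiting identity (\ref{E+A}): set $\Phi(x):=V(x)+x$ and $F(f):=\frac{1}{2}\int_M \Phi(|f_z|^2)$, so that $E_V(f)=F(f)-\frac{1}{2}\mbox{area}(f(M))$. Since the oriented area is determined by $f|_{\partial M}$ (by Stokes), comparing $E_V(\tilde f)$ and $E_V(f)$ among maps with common boundary data is the same as comparing $F(\tilde f)$ and $F(f)$; moreover the stability condition (\ref{stability}) reads $\Phi'(|f_z|^2)\geq 0$, and the Euler--Lagrange equation of \prpref{eulerV} says that $g:=\Phi'(|f_z|^2)f_z$ is holomorphic.

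For the ``if'' direction I would set $h:=\tilde f - f$ (so $h|_{\partial M}=0$) and apply the convexity inequality $\Phi(b)-\Phi(a)\geq \Phi'(a)(b-a)$ pointwise with $a=|f_z|^2$ and $b=|f_z+h_z|^2=a+2\re(\bar f_z h_z)+|h_z|^2$. Integration then yields
$$
F(\tilde f)-F(f)\;\geq\;\int_M \re(\bar g\, h_z)\;+\;\frac{1}{2}\int_M \Phi'(|f_z|^2)\,|h_z|^2.
$$
The first integral vanishes by Stokes' theorem applied to the 1-form $\bar g\,h\,d\bar z$, whose exterior derivative is $\bar g\,h_z\,dz\wedge d\bar z$ (using that $\partial_z\bar g=0$) and which restricts to zero on $\partial M$; the second is $\geq 0$ by stability. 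Hence $F(\tilde f)\geq F(f)$, as desired.

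For the strict-minimizer claim I would observe that strict convexity of $\Phi$ (inherited from $V$) upgrades the pointwise convexity step to strict inequality unless $|f_z+h_z|=|f_z|$ a.e., and equality additionally requires $\Phi'(|f_z|^2)|h_z|^2\equiv 0$. The hypothesis that $1+V'(|f_z|^2)$ does not vanish identically means $g\not\equiv 0$ on $\interior(M)$, so by the identity theorem its zeros are isolated; therefore $\Phi'(|f_z|^2)>0$ on a dense open set and $h_z\equiv 0$ almost everywhere. The Lipschitz map $h$ is thus antiholomorphic, and vanishing on the smooth arcs of $\partial M$ forces $h\equiv 0$ by the identity theorem.

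For the remaining ``only if'' direction and the local instability claim, I would assume $f$ is weak critical with $\Phi'(|f_z(p)|^2)<0$ at some $p\in\reg(f)\cap\interior(M)$ and compute the second variation
$$
\ddot F(f)=\int_M \bigl(2\Phi''(|f_z|^2)\,(\re(\bar f_z\dot f_z))^2+\Phi'(|f_z|^2)\,|\dot f_z|^2\bigr).
$$
Choosing $\dot f$ supported in a small neighbourhood $U$ of $p$ with $\dot f_z$ nearly parallel to $if_z(p)$ makes $\re(\bar f_z\dot f_z)$ arbitrarily small, so the (negative) second term dominates and $\ddot F<0$; this produces variations lowering $E_V$, simultaneously obstructing the minimizer property and exhibiting local instability. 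The main technical point will be carrying out the Stokes identities cleanly despite the finitely many singular points of $f$ and $\tilde f$: this can be handled by excising small disks around each singularity and letting their radius tend to zero, using the Lipschitz bound on $h$ together with the growth condition $V'(x)\sqrt{x}\in O(1)$ to ensure that the boundary contributions vanish in the limit.
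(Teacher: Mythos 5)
Your argument is correct and follows essentially the same route as the paper's proof: the pointwise convexity inequality for the energy density in $f_z$, a Stokes-type identity to kill the cross term (the paper uses $\int_M\det(dh)=0$ together with closedness of the stress form $\sigma$, while you fold the area term into $\Phi=V+\mathrm{id}$ once and apply Stokes to $\bar g\,h\,d\bar z$ using holomorphicity of $g$ --- these are equivalent reorganizations of the same computation, both requiring the same excision argument at the singular points), and a second-variation computation for the instability claim based on a variation whose $z$-derivative is orthogonal to $f_z(p)$. The only place where your sketch is thinner than the paper is that last construction: the paper exhibits the explicit scaled radial bump $h(z)=\tfrac{1}{\epsilon}\,\phi(|z|/\epsilon)\,z$, for which $h_z$ is real-valued (hence orthogonal to $f_z(p)$ after a rotation), $\int|h_z|^2$ stays bounded below as $\epsilon\to 0$, and the $V''$ term vanishes in the limit --- you should supply such a concrete test function to justify the assertion that the negative term dominates.
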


\begin{proof}
Suppose $f$ is an almost smooth solution of the Euler-Lagrange equation for
$E_V$ and (\ref{stability}) holds. Let $\tilde{f}: M \to \mathbb{R}^2$ be
another almost smooth map sharing with $f$ the same boundary values. Then
$\tilde f = f + h$ where $h$ is almost smooth and vanishes on $\partial M$.  
Due to the fact that $V$ is a smooth and strictly convex function we have
\begin{equation*}
V(x)  \geq V(y) + V'(y)(x-y), \quad \quad \quad \quad \forall x,y \in \left(0,\infty\right).
\end{equation*}
For the elastic energy this implies
\begin{align}
\label{E con}
E_V(f+h) &= \frac{1}{2}\int_M V(|f_z + h_z|^2) + |f_{\bar z} + h_{\bar z}|^2 \\
&\geq \frac{1}{2} \int_M V(|f_z|^2) + V'(|f_z|^2)(|f_z + h_z|^2 - |f_z|^2) + |f_{\bar
z} + h_{\bar z}|^2 \nonumber \\
&= E_V(f) + \frac{1}{2} \int_M V'(|f_z|^2)(2\langle f_z, h_z \rangle + |h_z|^2)
+ 2\langle f_{\bar z},h_{\bar z} \rangle + |h_{\bar z}|^2. \nonumber
\end{align} 
Due to the fact that $h$ vanishes on the boundary the oriented surface area of $h(M)$ is zero:
\begin{equation}
 \label{dh}
0 = \int_M \det(dh) = \int_M |h_z|^2 - |h_{\bar z}|^2.
\end{equation}
We insert this into (\ref{E con}) and obtain:
\begin{align}
\label{eq:minimizing-estimate}
&E_V(f+h) - E_V(f) \\
&\geq \frac{1}{2} \int_M (1+V'(|f_z|^2))|h_z|^2 +  \frac{1}{2} \int_M V'(|f_z|^2) 2\langle f_z, h_z \rangle
+ 2\langle f_{\bar z},h_{\bar z} \rangle. \nonumber
\end{align}
We now show that the second integral vanishes for almost smooth maps $h$ supported in $\interior(M)$:
\begin{align*}
&\int_M \left( V'(|f_z|^2) 2\langle f_z, h_z \rangle + 2\langle f_{\bar{z}},h_{\bar{z}} \rangle \right) \frac{i}{2} dz \wedge d\bar{z}\\ 
&=  \re \int_M \left(2V'(|f_z|^2) f_z \overline{h_z}  + 2 f_{\bar{z}} \overline{h_{\bar{z}}} \right) \frac{i}{2} dz \wedge d\bar{z} \\ 
&=  \re \int_M i\left( V'(|f_z|^2) f_z dz \wedge \overline{h_z} d\bar{z}  - f_{\bar z} d\bar{z} \wedge \overline{h_{\bar z}}  dz \right) \nonumber \\ 
&=  \re \int_M i\left( V'(|f_z|^2) f_z dz - f_{\bar z} d\bar{z}\right)  \wedge \left( \overline{h_z} d\bar{z} + \overline{h_{\bar z}}  dz \right) \\
&= \re \int_M \sigma \wedge \overline{dh} \\
&= \int_M \langle \sigma \wedge dh \rangle. \\
\end{align*}
Cutting out small disks around the points where $h$ and $\sigma$ are not smooth
we obtain a domain $M_0$ where we can apply Stokes theorem. Using $d\sigma = 0$
we get:
\begin{equation}
\label{m0}
\int_{M_0} \langle dh \wedge \sigma \rangle = \int_{\partial M_0} \langle h,
\sigma \rangle.
\end{equation}
Due to the fact that both $h$ and $\sigma$ are bounded the
boundaries of the disks do not contribute to (\ref{m0}) in the limit of
shrinking disks. Moreover, $h$ vanishes on $\partial M$ and we obtain $\int_M \langle dh
\wedge \sigma \rangle =0$ and therefore \eqref{eq:minimizing-estimate} becomes
\begin{equation}
E_V(f+h) - E_V(f) \geq \frac{1}{2} \int_M (1+V'(|f_z|^2))|h_z|^2. \nonumber
\end{equation}
This shows that $f$ is a minimizer with fixed boundary of the elastic energy provided that (\ref{stability})
holds. Moeover, $f$ will be a strict minimizer if $1+V'(|f_z|^2)$ does not vanish identically.

It remains to be proven that $f$ is locally unstable if there is a point $p \in
\reg(f) \cap \interior(M)$ with $1+V'(|f_z|^2) < 0$. Let $p$ be such a point,
$U \subset \reg(f) \cap \interior(M)$ any neighbourhood of $p$ and $h: \to
\mathbb{R}^2$ a smooth function compactly supported in $U$. We compute the
second derivative with respect to $t$ of $E_V(f+t h)$:
\begin{align}
\frac{d}{dt} E_V(f+t h) &=  \int_M V'(|f_z+t h_z|^2) \langle
f_z+t h_z, h_z \rangle + \langle f_{\bar z}+t h_{\bar z}, h_{\bar z} \rangle
\nonumber\\
\left. \frac{d^2}{dt^2}\right|_{t=0} E_V(f+t h) &=  \int_M 2 \, V''(|f_z|^2)
\langle f_z, h_z \rangle^2 + V'(|f_z|^2)|h_z|^2 + |h_{\bar z}|^2 \nonumber\\
&= 2 \int_M V''(|f_z|^2) \langle f_z, h_z \rangle^2 + \int_M (1+V'(|f_z|^2))|h_z|^2. 
\label{ustabel} 
\end{align}

Since the elastic energy is invariant under euclidean motions we can assume without loss of generality that $p=0$ and $f_z(p)$ is purely imaginary. Let $\phi:\, \mathbb{R} \to \mathbb{R}$ be a compactly supported even
function and define for $\epsilon > 0$ 
\begin{equation*}
h(z) = 1/\epsilon \, \phi(|z|/\epsilon) \, z.
\end{equation*}
Then $h_z$ is real valued and $\langle f_z(0), h_z(0) \rangle = 0$. It is now
easy to see that in the limit of small $\epsilon$ the first integral in
(\ref{ustabel}) goes to zero while the second one approaches a negative value.
The second variation of $E_V$ is therefore negative for small $\epsilon$.
\end{proof}

\section{Melting point solutions}
\label{chap melt}
For weak critical points $g=(1+ V'(|f_z|^2)\,f_z$ is a holomorphic function and $f_z$ has only isolated zeros. Therefore $(1+ V'(|f_z|^2)$ either has only isolated zeros (and thus $f$ is a strict minimizer or locally unstable) or it is identically zero. In the latter case $|f_z|$ will be constant.

\begin{definition}
An almost immersed map $f:M \mapsto \mathbb{R}^2$ is called melting point solution of $E_V$ if on the whole of $\reg(f)$ we have
\begin{equation}
\label{melt}
1+V'(|f_z|^2)=0.
\end{equation}
\end{definition}
For a melting point solution $|f_z|$ is a constant. In particular, for $E_\lambda$ the melting point condition (\ref{melt}) is equivalent to
\begin{equation*}
|f_z| =\frac{\lambda}{1+\lambda}.
\end{equation*}

From Proposition $\ref{eulerV}$ it follows that melting point solutions are weak critical points of $E_V$ and from Proposition $\ref{prop minimizer}$ we obtain that they are minimizers of $E_V$ but not necessarily strict ones. 

Using the language of Physics, if the material is compressed in such a way that $|f_z|$ somewhere falls below a critical lower bound, the material becomes unstable, it ``melts''.

Some melting point solutions can be obtained by a convergent sequence of strict minimizers of $E_V$ $\left(f_n \right)_{n\in \mathbb{N}}:M\mapsto \mathbb{R}^2$ whose limit 
satisfies $(\ref{melt})$. Not all melting point solutions arise this way:  
For strict minimizers $\mbox{arg}(f_z)$ is a harmonic function on $\mbox{reg}(f)$, so only those melting point solutions for which this also holds can be obtained as a limit of strict minimizers. We call such melting point solutions {\em borderline solutions}.

Not all melting point solutions are borderline solutions:
Let $M\subset \mathbb{C}$ be domain that does not containing the origin and define
\begin{align*}
f:M &\mapsto \mathbb{C} \\ 
f(z) &= \frac{\lambda}{i\left( 1+\lambda\right) \overline{z}}\,e^{iz\overline{z}} + \overline{k}(z).
\end{align*}
where $k:M \mapsto \mathbb{C}$ is a suitable holomorphic function defined as in (\ref{k}). Then
\begin{equation*}
f_z= \frac{\lambda}{1+\lambda}\,e^{iz\overline{z}}
\end{equation*}
and $f$ is a melting point solution for $E_\lambda$, but $\mbox{arg}(f_z)=z \overline{z}$ is not harmonic.

\section{Weierstrass representation}
\label{cp weierstrass}
The well-known Weierstrass representation describes conformal parametrizations of minimal surfaces
$f: M \to \mathbb{R}^3$ ($M$ a simply connected planar domain) in terms of
two holomorphic functions $g,h$ on $M$:
\begin{equation*}
 f(z)=\mbox{Re} \int h(1-g^2,i(1+g^2),2g).
\end{equation*}
Here we obtain a similar result for \textit{elastic maps}.

\begin{theorem}
\label{thmweierstrass1}
Let $M\subset \mathbb{C}$ be a simply connected domain and $h:M\rightarrow \mathbb{C}$ a holomorphic function with only finitely many zeros. Define
\begin{equation*}
H:=\int h\, dz \qquad \qquad  G:=\int h^2\,dz.
\end{equation*} 
Then there exists a meromorphic function $k:M\rightarrow \hat{\mathbb{C}}$ such that for $\lambda \geq 0$
\begin{align*}
f:M &\rightarrow \mathbb{C} \\
f&= \frac{1}{2}\left( G + \frac{2\lambda}{1+\lambda}\frac{H}{\overline{h}}\right)  + \overline{k}.
\end{align*}
is an almost immersed map and a strict minimizer of $E_\lambda$.
\end{theorem}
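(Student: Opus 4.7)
The plan is to verify the Euler-Lagrange equation and the stability inequality for the proposed $f$ on the open set where $h\neq 0$, treating $k$ as a device whose sole role is to cancel the singularities of $H/\bar h$ at the zeros of $h$. (The case $\lambda=0$ is trivial: the formula reduces to $f=G/2+\bar k$, and $k=0$ makes $f=G/2$ holomorphic, hence an absolute minimizer of $E_0=\tfrac{1}{2}\int|f_{\bar z}|^2$.) Assume $\lambda>0$. Since $G$ is holomorphic, $\bar k$ is antimeromorphic, and $(1/\bar h)_z=0$, on $\{h\neq 0\}$
\begin{equation*}
f_z \;=\; \tfrac{1}{2}h^{2}+\tfrac{\lambda}{1+\lambda}\cdot\frac{h}{\bar h} \;=\; \frac{h}{\bar h}\Bigl(\tfrac{|h|^{2}}{2}+\tfrac{\lambda}{1+\lambda}\Bigr).
\end{equation*}
Because $|h/\bar h|=1$ and the bracket is real and positive, we immediately read off $|f_z|=\tfrac{|h|^{2}}{2}+\tfrac{\lambda}{1+\lambda}$ and $f_z/|f_z|=h/\bar h$.

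With these identities the Euler-Lagrange check is a direct substitution. For $E_\lambda$ one has $V(x)=\lambda(\sqrt{x}-1)^{2}$, so $1+V'(|f_z|^{2})=1+\lambda-\lambda/|f_z|$, and the holomorphic test function of \prpref{eulerV} evaluates to
\begin{equation*}
g \;=\; (1+\lambda)f_z - \lambda\cdot\tfrac{h}{\bar h} \;=\; \frac{h}{\bar h}\bigl[(1+\lambda)|f_z|-\lambda\bigr] \;=\; \tfrac{1+\lambda}{2}\,h^{2},
\end{equation*}
which is holomorphic. The stability inequality $1+V'(|f_z|^{2})\geq 0$ reduces to $|f_z|\geq\tfrac{\lambda}{1+\lambda}$ and holds, with equality precisely at the finite zero set of $h$. \prpref{prop minimizer} then concludes that $f$ is a strict minimizer of $E_\lambda$, provided $f$ has first been shown to be almost immersed.

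The main obstacle is therefore the construction of $k$, together with the associated Lipschitz bookkeeping near zeros of $h$. At a zero $p$ of $h$ of order $m$, let $R_p(z)$ denote the principal part of $1/h$ at $p$; then the antimeromorphic principal part of $1/\bar h$ at $p$ is $\overline{R_p(z)}$. Using $H(z)=H(p)+O((z-p)^{m+1})$, which holds because $h=H'$ has a zero of order $m$ at $p$, we decompose near $p$
\begin{equation*}
\frac{H}{\bar h} \;=\; H(p)\,\overline{R_p} \;+\; \bigl(\text{antiholomorphic smooth}\bigr) \;+\; (H-H(p))\,\overline{R_p} \;+\;\cdots,
\end{equation*}
where the last group of terms is $O(|z-p|)$ by the elementary bound $|z-p|^{m+1}|z-p|^{-m}=|z-p|$. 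Only the first summand is genuinely antimeromorphic, so by Mittag-Leffler on the simply connected domain $M$ there exists a meromorphic $k$ whose principal part at each zero of $h$ is prescribed so that $\bar k$ cancels $\tfrac{\lambda}{1+\lambda}H(p)\overline{R_p}$. With this choice, $f$ is smooth on the complement of the finite zero set of $h$, continuous and Lipschitz across each $p$ (the $f_z$ bound is immediate from the closed form above; controlling $f_{\bar z}$ requires differentiating the residual decomposition in $\bar z$ and noting that the worst surviving terms have the form $(z-p)^{m+1}(\bar z-\bar p)^{-m-1}$, bounded though discontinuous at $p$), and $f_z$ is nonzero off the zero set of $h$. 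Hence $f$ is almost immersed, and the previous paragraph delivers the strict minimizer property. The hardest step is this last Lipschitz analysis; the rest is algebraic.
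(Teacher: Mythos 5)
Your proposal is correct and follows essentially the same route as the paper: the same splitting of $\tfrac{H}{\overline{h}}$ into $\tfrac{H-H(p)}{\overline{h}}$ plus the antimeromorphic principal part $H(p)\overline{R_p}$ cancelled by $\overline{k}$, the same boundedness estimates for $f_z$ and $f_{\bar z}$ near the zeros of $h$, and the same algebraic verification that $g=\tfrac{1+\lambda}{2}h^2$ together with $1+V'(|f_z|^2)\geq 0$ via Proposition~\ref{prop minimizer}. The only cosmetic differences are that you invoke Mittag--Leffler where the paper simply writes down the finite sum of principal parts explicitly, and that you treat the degenerate case $\lambda=0$ separately, which the paper leaves implicit.
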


\begin{proof}
If $p_1,\ldots,p_n$ are the zeros of $h$ then
\begin{align*}
f:M \setminus\lbrace p_1,\ldots,p_n\rbrace &\to \mathbb{C} \\
f&=\frac{1}{2}\left( G + \frac{2\lambda}{1+\lambda}\frac{H}{\overline{h}}\right),
\end{align*}
is a well defined smooth function on $M \setminus\lbrace p_1,\ldots,p_n\rbrace$ but has singularities at $p_1,\ldots,p_n$.

The second term on the right-hand side of
\begin{equation}
\label{Hoverh}
\frac{H(z)}{\overline{h(z)}} = \frac{H(z)-H(p_j)}{\overline{h(z)}} + \frac{H(p_j)}{\overline{h(z)}},
\end{equation}
is antimeromorphic and has poles at the zeros of $h$.  Near $p_j$ we can express $\frac{H(p_j)}{\overline{h(z)}}$ as a Laurent series
\begin{equation*}
\frac{H(p_j)}{\overline{h(z)}} = \sum_{m=-l_j}^{\infty} c_{jm} (\bar{z}-p_j)^m.
\end{equation*}
Now we define a meromorphic function $k: M \to \hat{\CC}$ by
\begin{equation*}
\overline{k(z)}:= -\sum_{j=1}^n \sum_{m=-l_j}^{-1} c_{jm} (\bar{z}-p_j)^m,
\end{equation*}
and obtain that the restriction of $\frac{H(p_j)}{\overline{h(z)}}+\overline{k(z)}$ to a small neighborhood of $p_j$ is antiholomorphic.
The first term on the right-hand side of \ref{Hoverh}, tends to zero as $z$ goes to $p_j$, because $H(z)-H(p_j)$ has a zero of degree $l_j+1$ at $p_j$ if $\overline{h(z)}$ has one of degree $l_j$. 
Therefore, after adding $\overline{k}$, $f$ extents to a continuous map (still called $f$) on the whole of $M$. 
In order to see that $f$ is an almost immersed map it remains to show that the derivatives of $f$ are bounded on $M \setminus\lbrace p_1,\ldots,p_n\rbrace$.
$f_z$ is bounded because $h$ is holomorphic on $M$ and
\begin{align}
\label{fz}
f_z &= \frac{1}{2}\left( h^2+\frac{2 \lambda}{1+\lambda}\frac{h}{\overline{h}}\right)=\frac{1}{2}\left( h\overline{h}+\frac{2 \lambda}{1+\lambda}\right)\frac{h}{\overline{h}} \\ \nonumber
|f_z| &=\frac{1}{2}\left( h\overline{h}+\frac{2 \lambda}{1+\lambda}\right).
\end{align}
Now we consider $f_{\bar{z}}$ in a neighborhood $U_j$ of $p_j$
\begin{align*}
\frac{1+\lambda}{\lambda}f_{\bar{z}} = \left(\frac{H}{\overline{h}}\right)_{\bar{z}} + \overline{k}_{\bar{z}}
= \left(\frac{H - H(p_j)}{\overline{h}}\right)_{\bar{z}} + \left(\frac{H(p_j)}{\overline{h}} + \overline{k}\right)_{\bar{z}}.
\end{align*}
The second term is antiholomorphic and therefore bounded. In order to investigate the first term note that on $U_j$ there are nowhere vanishing holomorphic functions $a_j,b_j: U_j \to \CC$ such that
\begin{equation*}
\overline{h(z)} = (\overline{z-p_j})^{l_j} \,\overline{a_j(z)} \quad \textnormal{ and } \quad H(z)-H(p_j) = (z-p_j)^{l_j+1}\, b(z).
\end{equation*}
Then on $U_j$ we obtain
\begin{equation*}
\left| \left(\frac{H(z)-H(p_j)}{\overline{h(z)}}\right)_{\bar{z}} \right| = \left| \frac{\tilde{a}(z)(l_j\overline{a_j(z)} + (\bar{z}-p_j)\overline{a_j(z)}')}{\overline{a_j(z)}^2} \right|.
\end{equation*}
Hence both $f_z$ and $f_{\bar{z}}$ are bounded and therefore $f$ is an almost immersed map.

$f$ is a weak critical point of $E_\lambda$ if and only if away from finitely many points $g = \left(1+\lambda \right)f_z - \lambda \frac{f_z}{|f_z|} $ is a holomorphic function. This is indeed the case, by \ref{fz} we have
\begin{align}
\label{comp2}
g =\left(1+\lambda \right)f_z - \lambda \frac{f_z}{|f_z|} =\left(\frac{1+\lambda}{2}h\overline{h}+\lambda -\lambda \right)\frac{h}{\overline{h}}= \frac{1+\lambda}{2}h^2. 
\end{align}
In order for $f$ to be a strict minimizer we in addition must have
\begin{equation*}
1 + V'\left(\left|f_z\right|^2\right)\geq 0
\end{equation*} where the left side does not vanish identically.
For $E_\lambda$ we have $V(x)= \lambda \left(\sqrt{x}-1\right)^2$ and therefore we obtain
\begin{align*}
1 + V'\left(\left|f_z\right|^2\right) &= 1+\lambda\left(1 -\frac{1}{|f_z|}\right)\\ 
&= (1+\lambda)-\frac{\lambda}{\frac{1}{2}\left( |h|^2+\frac{2 \lambda}{1+\lambda}\right)} \\ 
&= \frac{|h|^2(1+\lambda)^2}{2\lambda+|h|^2(1+\lambda)}\geq 0.
\end{align*}
Since we assumed that $h$ does not vanish identically, neither does $1 + V'\left(\left|f_z\right|^2\right)$.
\end{proof}
\begin{figure}
\includegraphics[width=0.8\textwidth]{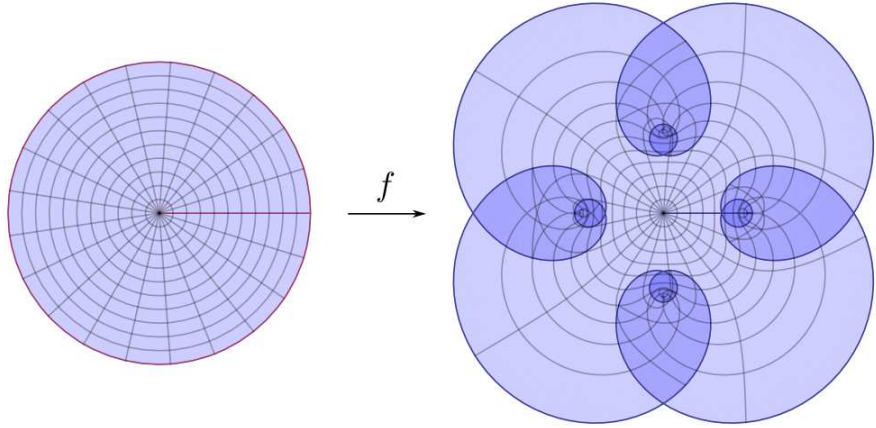}
\caption{The disk with center $(0,0)$ and radius 1.2 is deformed by the elastic map $f$ based on the holomorphic function $h(z) = z^4-1$. Since $h$ has zeros at $ 1,-1,i,-i$ we choose $k(z):= \frac{1}{10}\left( \frac{1}{z+1} + \frac{1}{z-1} + \frac{1}{z-i} +\frac{1}{z+i} \right)$. This compensates the corresponding poles of $ \frac{\int h \,dz}{\bar{h}} $. Note that $f$ has a branch point wherever $h$ has a zero. Figures 2-5 were made with Wolfram Mathematica.}
\end{figure}
Note that the minimizers $f$ that can be constructed based on the Weierstrass representation given in Theorem \ref{thmweierstrass1} are not completely general: By (\ref{comp2}) all zeros of the holomorphic function $g$ corresponding to such an $f$ have even order. Nevertheless, under this additional assumption the converse of Theorem \ref{thmweierstrass1} is also true: 

\begin{theorem}
\label{thmweierstrass2}
Let $M \subset \mathbb{C}$ be a simply connected domain and $f:M \to \mathbb{C}$ a weak critical point of $E_\lambda$. By Proposition \ref{eulerV}
\begin{equation}
\label{define-g}
g:=\left(1+\lambda \right)f_z - \lambda \frac{f_z}{|f_z|}
\end{equation}
extends to a holomorphic function on the whole of $M$. Assume that all zeros of $g$ have even order. Then there is a holomorphic function $h$ on $M$ and a meromorphic function $k$ on $M$ such that
\begin{equation*}
f= \frac{1}{2}\left( G + \frac{2\lambda}{1+\lambda}\frac{H}{\overline{h}}\right)  + \overline{k}
\end{equation*}
where $H=\int h$ and $G=\int h^2$.
\end{theorem}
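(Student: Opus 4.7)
The plan is to reverse-engineer the construction in Theorem~\ref{thmweierstrass1}. Since $M$ is simply connected and every zero of the holomorphic function $g$ has even order, I would first pick a global holomorphic square root $h$ of $\frac{2g}{1+\lambda}$, so that $h^2=\frac{2g}{1+\lambda}$. The primitives $H:=\int h\,dz$ and $G:=\int h^2\,dz$ are then well-defined holomorphic functions on $M$, and I would set
\[
\phi:=\frac{1}{2}\Bigl(G+\frac{2\lambda}{1+\lambda}\frac{H}{\bar{h}}\Bigr),
\]
which is smooth on $M\setminus\{h=0\}$.

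The heart of the argument is to prove $\phi_z=f_z$ on $\reg(f)\setminus\{h=0\}$. Differentiating gives
\[
\phi_z=\frac{1}{2}\Bigl(h^2+\frac{2\lambda}{1+\lambda}\frac{h}{\bar{h}}\Bigr)=\frac{g}{1+\lambda}+\frac{\lambda}{1+\lambda}\frac{h}{\bar{h}}.
\]
Writing $f_z=|f_z|e^{i\psi}$, the Euler--Lagrange identity \eqref{define-g} becomes $g=\bigl((1+\lambda)|f_z|-\lambda\bigr)e^{i\psi}$, while $h/\bar{h}=e^{i\arg(h^2)}=e^{i\arg g}$, independent of the sign choice in $h$. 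Hence $h/\bar{h}=\pm f_z/|f_z|$, with a sign that is locally constant on $\reg(f)\setminus\{g=0\}$. In the stable regime $(1+\lambda)|f_z|\geq\lambda$ this sign is $+$ and a direct substitution yields $\phi_z=f_z$; a weak critical point in the opposite regime on an open set is locally unstable by Proposition~\ref{prop minimizer}, and by the computation in the proof of Theorem~\ref{thmweierstrass1} it is not in the image of the representation, which always satisfies $|f_z|\geq\lambda/(1+\lambda)$.

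Third, I would upgrade $(\phi-f)_z=0$ to a global statement about $\bar{k}:=f-\phi$. The isolated branch points of $f$ are disjoint from $\{h=0\}$: at such a point $p$ one has $f_z(p)=0$, so taking the limit in $g=(1+\lambda)f_z-\lambda f_z/|f_z|$ and using the continuity of the holomorphic function $g$ forces $|g(p)|=\lambda\neq 0$. Since $f$ is Lipschitz and $\phi$ is smooth in a neighbourhood of every branch point, $\bar{k}$ is bounded there and extends antiholomorphically across it by the removable singularity theorem, making $k$ holomorphic on $M\setminus\{h=0\}$.

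Finally, near each zero $p_j$ of $h$ the decomposition used in \eqref{Hoverh} exhibits the antimeromorphic principal part of $\phi$ in $\bar{z}-\overline{p_j}$; since $f$ stays bounded, $\bar{k}$ must absorb exactly this antimeromorphic principal part, so $k$ extends meromorphically across $p_j$. Combined with $f=\phi+\bar{k}$, this yields the claimed representation. The main obstacle is the sign-matching in the second step: the ratio $h/\bar{h}$ is determined rigidly by $g$ and cannot be corrected by the choice of square root, so the representation captures precisely the stable branch $\arg g=\arg f_z$; the even-order-zero hypothesis only secures the existence of $h$, while the sign-matching must be imported from the stability of $f$.
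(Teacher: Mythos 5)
Your proof follows the same route as the paper's, which consists of choosing $h$ with $\tfrac{1+\lambda}{2}h^2=g$, setting $\overline{k}:=f-\tfrac12\bigl(G+\tfrac{2\lambda}{1+\lambda}\tfrac{H}{\overline{h}}\bigr)$ and asserting that ``one can check'' that $k$ is holomorphic with poles only at the zeros of $g$. You have actually done the checking, and your sign analysis is the substantive point the paper suppresses: $h/\overline{h}=e^{i\arg g}$ equals $f_z/|f_z|$ precisely where $(1+\lambda)|f_z|-\lambda>0$, i.e.\ where $1+V'(|f_z|^2)>0$, so $\phi_z=f_z$ only on the stable branch. Since any map of the Weierstrass form automatically satisfies $|f_z|=\tfrac12\bigl(|h|^2+\tfrac{2\lambda}{1+\lambda}\bigr)\geq\tfrac{\lambda}{1+\lambda}$, a weak critical point with $1+V'(|f_z|^2)<0$ cannot be represented at all; your remark that the theorem as stated really concerns the minimizing branch (consistent with its role as a converse to Theorem~\ref{thmweierstrass1}) is a correct and worthwhile caveat, and by the sign dichotomy of \secref{chap melt} the sign is in fact globally constant on $\reg(f)\setminus\{g=0\}$, not merely locally.

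There is, however, one incorrect intermediate claim: that the exceptional points of $f$ are disjoint from $Z(h)$ because ``$f_z(p)=0$ at a branch point.'' In this paper an almost immersed map has $f_z\neq 0$ on all of $\reg(f)$, and at the finitely many exceptional points $f_z$ need not tend to zero; indeed, for the maps produced by Theorem~\ref{thmweierstrass1} the exceptional points are \emph{exactly} the zeros of $h$, where $|f_z|\to\tfrac{\lambda}{1+\lambda}>0$ and $g(p_j)=\tfrac{1+\lambda}{2}h(p_j)^2=0$, contradicting both your premise and your conclusion $|g(p)|=\lambda$. Fortunately the claim is not load-bearing: the argument of your final paragraph --- subtract from $\phi$ its antimeromorphic principal part coming from $H(p_j)/\overline{h}$, note that $f$ is Lipschitz hence bounded and that the remainder of $\phi$ is bounded, and invoke the removable singularity theorem for the antiholomorphic function $\overline{k}$ on the punctured neighbourhood --- applies verbatim at every point of the finite set $\{p_1,\dots,p_n\}\cup Z(h)$, whether or not the two sets overlap. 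With the disjointness claim deleted and the two extension steps merged, the proof is complete and, modulo the stability caveat, coincides with what the paper intends.
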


\begin{proof} 
By our assumptions there is a holomorphic function $h:M\to\mathbb{C}$ such that
\begin{equation*}
\frac{1+\lambda}{2}h^2 =g.
\end{equation*}
Choose holomorphic functions $G,H$ on $M$ such that $H'=h$ and $G'=h^2$. Then one can check that the function $k$ defined away from the zeros of $g$ by
\begin{equation*}
\overline{k}:=f-\frac{1}{2}\left( G + \frac{2\lambda}{1+\lambda}\frac{H}{\overline{h}}\right)
\end{equation*}
is holomorphic and has only poles at the zeros of $g$.
\end{proof}

It is indeed possible to obtain a Weierstrass representation for general strict minimizers. The only complication is that in general the Weierstrass data live on the Riemann surface obtained as the double cover of the elastic domain branched over the odd-order zeros of $g$. The main information that will allow us to construct examples is contained in the proof of the following theorem:

\begin{theorem}
\label{thmweierstrass3}
Let $M \subset \mathbb{C}$ be a simply connected domain and $g:M \rightarrow \mathbb{C}$ a holomorphic function with only finitely many zeros. Then there exists an almost immersed map $f:M \to \mathbb{C}$ such that  (\ref{define-g}) holds.
\end{theorem}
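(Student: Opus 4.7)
The plan is to reduce to Theorem~\ref{thmweierstrass1} by passing to a double branched cover of $M$ and then gluing the resulting local solutions using the simple-connectedness of $M$.

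First, (\ref{define-g}) determines $f_z$ algebraically. Taking absolute values yields $|f_z|=(|g|+\lambda)/(1+\lambda)$, and matching arguments gives $\arg f_z = \arg g$, so on $M \setminus Z$ (where $Z$ is the finite zero set of $g$) one must have
\[
 f_z \;=\; F \;:=\; \frac{g}{1+\lambda}\;+\;\frac{\lambda}{1+\lambda}\cdot\frac{g}{|g|}.
\]
Conversely $(1+\lambda)F-\lambda F/|F|=g$ by direct computation, so it suffices to produce an almost immersed $f:M \to \mathbb{C}$ with $\partial_z f = F$. The bound $|F|\leq (|g|+\lambda)/(1+\lambda)$ controls $|f_z|$, and $f_z=F$ is automatically nonzero off $Z$, so the only real content is existence of a Lipschitz primitive.

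Let $\pi:\tilde M \to M$ be the double cover branched over the odd-order zeros of $g$, with deck involution $\tau$. Then $\pi^* g$ has only even-order zeros on $\tilde M$, so there is a single-valued holomorphic $h:\tilde M \to \mathbb{C}$ with $h^2 = \tfrac{2}{1+\lambda}\,\pi^* g$ and $\tau^* h = -h$; a brief calculation then shows $\pi^*F = \tfrac12 h^2 + \tfrac{\lambda}{1+\lambda}\cdot \tfrac{h}{\bar h}$, which is manifestly $\tau$-invariant. For each $p\in M$ I would choose a simply connected neighbourhood $U\ni p$ whose preimage $\pi^{-1}(U)$ has simply connected components; applying Theorem~\ref{thmweierstrass1} on such a component with $h$ as Weierstrass datum produces a Lipschitz almost immersed local map there whose $z$-derivative is the pullback of $F$, and averaging under $\tau$ yields a $\tau$-invariant Lipschitz function on $\pi^{-1}(U)$ that descends to $f_U:U\to\mathbb{C}$ with $\partial_z f_U = F$.

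Any two local primitives $f_U,f_V$ of $F$ differ on $U\cap V$ by an antiholomorphic function. Since the simply connected $M$ is Stein, $H^1(M,\overline{\mathcal{O}})=0$, so antiholomorphic corrections $\bar k_U$ on each $U$ can be chosen with $f_U + \bar k_U = f_V + \bar k_V$ on overlaps, giving a single global Lipschitz $f:M \to \mathbb{C}$ with $\partial_z f = F$. Elliptic regularity and smoothness of $F$ on $M\setminus Z$ imply $f$ is smooth there, and because $F$ is nonvanishing on $M\setminus Z$ the same holds for $f_z$, so $f$ is almost immersed; the algebraic identity above then yields (\ref{define-g}). The main technical obstacle throughout is Lipschitz regularity of $f$ at the branch points of $\pi$ (the odd-order zeros of $g$): this is resolved precisely by Theorem~\ref{thmweierstrass1}, because on $\tilde M$ those branch points are even-order zeros of $h^2$ and the formula of that theorem supplies a bounded-derivative local model there.
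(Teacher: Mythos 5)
Your reduction of (\ref{define-g}) to the single equation $f_z=F$ with $F=\tfrac{g}{1+\lambda}+\tfrac{\lambda}{1+\lambda}\tfrac{g}{|g|}$ is also the starting point of the paper's argument (there $F$ appears as $\tfrac{g}{1+\lambda}+\tfrac{\lambda}{1+\lambda}\tfrac{h}{\overline{h}}$ on the double cover), but from that point on you take a genuinely different route. The paper works globally on $\hat{M}$: the obstruction to writing the correction term as $H/\overline{h}$ with $dH=h\,dz$ is the period vector of $h\,dz$ over $H_1(\hat{M})$ (of rank $n-1$ when $g$ has $n$ odd-order zeros), and the paper kills it by invoking the Kusunoki--Sainouchi theorem to produce an antiholomorphic form $\overline{\omega}$ with matching periods, so that $h\,dz-\overline{\omega}$ is exact. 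You instead solve $f_z=F$ locally and glue: two local Lipschitz primitives differ by an antiholomorphic function, so the gluing obstruction lies in $H^1(M,\overline{\mathcal{O}})$, which vanishes for any open planar domain (simple connectivity is not even needed for this, since every planar domain is Stein). This is a correct and illuminating alternative: it shows that the periods of $h\,dz$ are irrelevant to the mere existence of $f$ and only matter if one wants the closed-form global Weierstrass data that the paper exploits in its subsequent examples. (Two small points: \eqref{define-g} does not \emph{force} $f_z=F$, since there is a second branch with $|f_z|<\lambda/(1+\lambda)$, but for existence your choice of branch suffices; and $|F|$ equals, rather than is bounded by, $(|g|+\lambda)/(1+\lambda)$.)

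One step is imprecise. At an odd-order zero $p$ of $g$ you cannot literally ``apply Theorem~\ref{thmweierstrass1} on the component of $\pi^{-1}(U)$'': that component is a \emph{branched} cover of the disk $U$, on which $z$ is a degenerate coordinate ($dz$ vanishes at the branch point), whereas Theorem~\ref{thmweierstrass1} is proved for $h$ holomorphic on a planar domain in the standard coordinate; applying it in a uniformizing coordinate $w$ with $z=p+w^2$ would prescribe $f_w$, not $f_z$. What you actually need is the direct estimate: if $g$ vanishes to order $2l+1$ at $p$, then $h\sim w^{2l+1}$, $h\,dz=2wh\,dw\sim w^{2l+2}dw$, hence $H=\int h\,dz\sim w^{2l+3}$ after normalizing $H$ to vanish at the branch point (this normalization is essential, otherwise $H/\overline{h}$ acquires a half-integer pole that no single-valued $\overline{k}$ can absorb), so that $|H/\overline{h}|=O(|z-p|)$, $\partial_z(H/\overline{h})=h/\overline{h}$ is unimodular, and $\partial_{\bar z}(H/\overline{h})=-H\,\overline{h_z}/\overline{h}^{\,2}$ is bounded. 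This is a one-line variant of the computation in the proof of Theorem~\ref{thmweierstrass1}, and is exactly the estimate the paper records as ``$H$ has a zero of degree $m+1$ at a degree $m$ zero of $h$'', so the gap is easily closed --- but as written your appeal to Theorem~\ref{thmweierstrass1} at the branch points does not apply verbatim.
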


\begin{proof}
Let $\hat{M}$ be the double cover of $M$ branched over the zeros of $g$ that have odd order. Denote by $z : \hat{M} \rightarrow M$ the projection. On $\hat{M}$ we have a well defined holomorphic function $h: \hat{M}\rightarrow \mathbb{C}$ such that 
\begin{equation*}
h^2 = \frac{2g\circ z}{1+\lambda}.
\end{equation*}
Let $\tau:\hat{M} \rightarrow \hat{M}$ be the involution that interchanges the two sheets of $\hat{M}$. Then
\begin{align*}
\label{tau}
\tau^2&=\textrm{id} \\
z \circ \tau &= z \\
h \circ \tau &= -h. 
\end{align*}
Since we assumed that $M$ is simply connected the first Betti number of $\hat{M}$ is
\begin{equation*}
\dim H_1(\hat{M}) = n-1.
\end{equation*}
In general there will not be any function $H:\hat{M}\to \mathbb{C}$ such that $dH= h\,dz$. However, by a theorem of Yukio Kusunoki and Yoshikazu Sainouchi \cite{Kusunoki}, there is a holomorphic 1-form $\eta$ on $\hat{M}$ with the same zeros as $h \, dz$ and periods
\begin{equation}
\int_{\alpha_i}\eta = \overline{\int_{\alpha_i}h\,dz}.
\end{equation}
The 1-form $h\,dz$ changes sign under $\tau$ and therefore also its periods. The same then holds for $\eta$ and therefore
\begin{equation}
\label{omega}
\omega:= \frac{\eta - \tau^*\eta}{2}
\end{equation}
has the same periods as $\eta$. The 1-form $h\,dz-\overline{\omega}$ then has no periods whatsoever and therefore is exact: There is a holomorphic function $H$ on $\hat{M}$ such that
\begin{equation}
\label{Hhat}
dH=h\,dz-\overline{\omega}.
\end{equation}
We have $\tau^*\omega=-\omega$ and therefore (after adding a constant to $H$) we can assume
\begin{equation*}
H\circ \tau = -H.
\end{equation*}
Then $\frac{H}{\overline{h}}$ is invariant under $\tau$ and we can define $F: M \rightarrow \mathbb{C}$ by
\begin{equation}
\label{H}
F \circ z:= \frac{H}{\overline{h}}.
\end{equation}
Since $\eta$ has the same zeros as $h$, (\ref{omega}) and (\ref{Hhat}) imply that at a degree $m$ zero $q\in \hat{M}$ of $h$ the function $H$ has a zero of degree $m+1$.

Away from finitely many zeros of $g$ the function $F$ is smooth and $F_z$ is non-zero. Moreover, our assumptions imply that the derivative of $F$ is bounded and therefore $F$ is an almost immersed map.

Given an arbitrary holomorphic function $k$ on $M$ we now can define an almost immersed map $f:M\rightarrow \mathbb{C}$ as 
\begin{equation*}
f:= \int \frac{g}{1+\lambda} dz + \frac{\lambda}{1+\lambda}F + \overline{k}.
\end{equation*}
We have 
\begin{equation*}
F_z=\frac{h}{\overline{h}} \circ z
\end{equation*}
and therefore
\begin{equation*}
f_z = \frac{g}{1+\lambda} + \frac{\lambda}{1+\lambda}\,\frac{h}{\overline{h}}.
\end{equation*}
Now a direct calculation shows (\ref{define-g}).
\end{proof}
 
The above proof leads to a practical method that allows us to find the minimizer $f$ that corresponds to a given $g$: We have to find a holomorphic differential $\omega$ satisfying the following properties:
\begin{align}
\label{cond1}
\tau^*\omega &= -\omega \\
\label{cond2}
\int_{\alpha_i} \omega &=\overline{\int_{\alpha_i} h \,dz}
\end{align}
for some homology basis $\lbrace\alpha_1,\ldots, \alpha_{n-1}\rbrace$ of $\hat{M}$.
The zeros of $\omega$ are not important because poles of $H$ can always be compensated by adding to $f$ a suitable anti-meromorphic function $\overline{k}$. In our examples we find $\omega$ based on the ansatz
\begin{equation*}
\omega := \sum_{i=1}^n x_i \,h^{2i-1} dz, \quad \quad \quad  x_i \in \mathbb{C}.
\end{equation*}
For such an $\omega$ condition (\ref{cond1}) is automatically satisfied because we only sum over odd powers of $h$. The coefficients $x_i$ have to be chosen such that (\ref{cond2}) holds. This amounts to a linear system $Bx=a$ where $a\in \mathbb{C}^n$ is defined as $a_j:= \overline{\int_{\alpha_j}h \,dz}$ and $b_{ij}:= \int_{\alpha_j} h^{2i-1} dz$. 
\begin{figure}[h!]
\includegraphics[width=\textwidth]{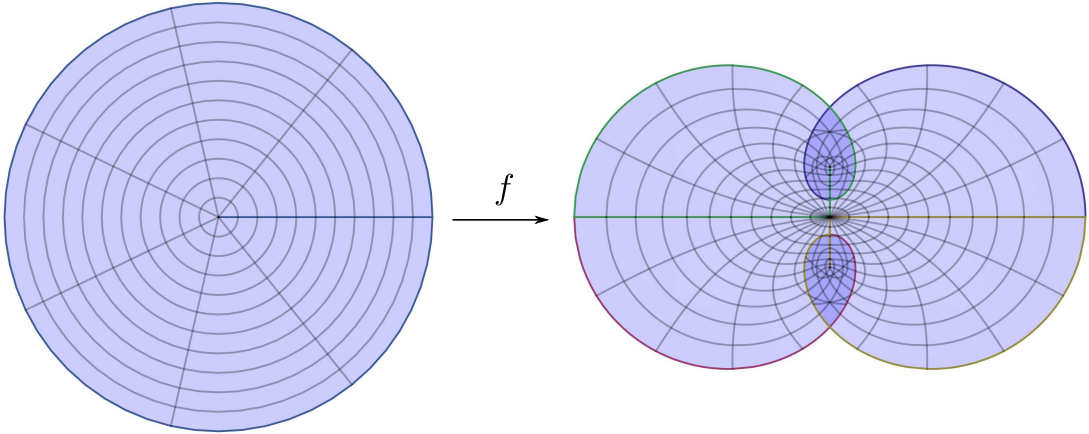} 
\caption{A circle with center $(0,0)$ and radius 1.4 and its image under the elastic deformation $f$ related to the holomorphic map $g(z)=z^2+1$. Since $g$ has two zeros of odd degree, $f$ has two branch points. }  
\label{weierstrass} 
\end{figure}
\begin{example}
Let us consider the case where $\lambda=1$, $M\subset \mathbb{C}$ is a simply connected domain containing $\pm i$ and $g(z):= z^2+1$. Since $g$ has two zeros of odd degree the first homology group of $\hat{M}$ has dimension one. One can show that the period $c:= \int_\alpha h\,dz$ of $h\,dz$ along any  non-trivial $\alpha \in H_1(\hat{M})$ is purly imaginary. So if we define
\begin{equation*}
\omega:= -\frac{c}{\overline{c}} \,h\,dz = h\,dz
\end{equation*}
then $\omega$ will satisfy (\ref{cond1}) and (\ref{cond2}). Now we define $F:M\rightarrow \mathbb{C}$ as in (\ref{H}) and obtain:
\begin{equation*}
F(z):= \frac{\mbox{Re} \left[\arcsinh(z)+z\sqrt{1+z^2}\right]}{2\sqrt{1+\overline{z}^2}}.
\end{equation*}
Choosing $k=0$ gives us the elastic map
\begin{equation*}
f(z):= \frac{1}{6}z^3 + \frac{1}{2}z + \frac{\mbox{Re} \left[\arcsinh(z)+z\sqrt{1+z^2}\right]}{4\sqrt{1+\overline{z}^2}}, 
\end{equation*}
that solves the differential equation $g = z^2+1 = (2 -\frac{1}{|f_z|})f_z.$ 
\end{example}

\subsection{Weierstrass representation of borderline solutions}
Not only strict minimizers but also borderline solutions admit a Weierstrass representation:
\begin{proposition}
\label{propweierstrass2}
Let $M\subset \mathbb{C}$ be a planar domain and $H,k:M\to\mathbb{C}$ two holomorphic functions and $\lambda>0$. Then

\begin{equation*}
f := \frac{\lambda}{1+\lambda}\frac{H}{\overline{h}} + \overline{k}
\end{equation*}

is a borderline solution for $E_{\lambda}$.
\end{proposition}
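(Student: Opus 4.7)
The plan is to verify directly the two conditions defining a borderline solution of $E_\lambda$: the melting point condition $1+V'(|f_z|^2)=0$, and the harmonicity of $\arg(f_z)$ on $\reg(f)$.

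First I would set $h := H'$. Since $\overline{k}$ and $1/\overline{h}$ are antiholomorphic while $H$ is holomorphic, $\partial_z$ acts only on the holomorphic numerator and one obtains
\[
f_z \;=\; \frac{\lambda}{1+\lambda}\,\frac{h}{\overline{h}}, \qquad |f_z| \;=\; \frac{\lambda}{1+\lambda}.
\]
For $E_\lambda$ one has $V(x)=\lambda(\sqrt{x}-1)^2$, so $1+V'(|f_z|^2)=1+\lambda(1-1/|f_z|)$, which vanishes precisely when $|f_z|=\lambda/(1+\lambda)$. Hence $f$ satisfies the melting point condition \eqref{melt}, and the holomorphic function $g=(1+V'(|f_z|^2))f_z$ of Proposition \ref{eulerV} is identically zero; in particular $f$ is a weak critical point and a melting point solution of $E_\lambda$.

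Next I would check the borderline property: from the formula above, on $\reg(f)$
\[
\arg(f_z) \;=\; \arg\!\left(\frac{h}{\overline{h}}\right) \;=\; 2\arg(h),
\]
which is twice the imaginary part of any local holomorphic branch of $\log h$, and therefore harmonic. Together with the previous step this matches the definition of a borderline solution given in Section \ref{chap melt}.

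The only routine thing left to verify is that $f$ is genuinely almost immersed, i.e.\ that $f_z$ and $f_{\bar z}$ are bounded. Boundedness of $f_z$ is immediate from $|f_z|=\lambda/(1+\lambda)$. For $f_{\bar z}$ near a zero $p$ of $h$ of order $m$, I would expand $H$ around $p$ using the fact that $H-H(p)$ vanishes to order $m+1$, and then split $H/\overline{h}$ into the sum of an antiholomorphic piece (bounded under $\partial_{\bar z}$ by $\overline{k}$ up to the usual meromorphic correction, cf.\ the argument in Theorem \ref{thmweierstrass1}) and a remainder that tends to zero at $p$; differentiating the remainder produces a bounded term. I do not foresee a genuine obstacle—the whole proof reduces to the one-line differentiation producing $f_z$, and the remaining analysis near the zeros of $h$ is exactly the local argument already developed in the proof of Theorem \ref{thmweierstrass1}.
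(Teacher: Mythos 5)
Your proof is correct and follows essentially the same route as the paper's: the paper's entire argument is the computation $f_z=\frac{\lambda}{1+\lambda}\frac{h}{\overline{h}}$ with $h:=H'$, from which $|f_z|$ is constant (the melting point condition) and $\arg f_z=2\arg h$ is harmonic. Your additional verification of the almost-immersed property goes beyond what the paper checks; note only that if $h$ has zeros, a merely holomorphic $k$ cannot compensate the poles of $H/\overline{h}$ there (that requires the meromorphic $k$ of Theorem~\ref{thmweierstrass1}), so your closing paragraph really only works when $h$ is zero-free or $H$ vanishes at the zeros of $h$ --- a gap in the proposition's statement rather than in your argument.
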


\begin{proof}
With $h:=H'$ we have
\begin{equation*}
f_z= \frac{\lambda}{1+\lambda}\,\frac{h}{\overline{h}}
\end{equation*}
and therefore $\mbox{arg}\,f_z$ is harmonic and $|f_z|$ is constant.
\end{proof}

\section{Deformations with free boundary}
\label{cp annu}
Up to now we only considered critical points $f:M\to \mathbb{R}^2$ of the elastic energy with respect to variations of $f$ with fixed boundary values. We call $f$ an elastic deformation with free boundary if $E(f)$ is critical under {\em all} variations of $f$, even if they move the boundary. According to (\eqref{Edot}) this amounts to saying that (in addition to solving the Euler-Lagrange equations in the interior) the restiction of the $\mathbb{R}^2$-valued 1-form $\sigma$ defined in \eqref{sigma} vanishes when applied to vectors tangent to the boundary:
\begin{equation*}
\sigma|_{T\partial M}=0.
\end{equation*}

Suppose $f$ is given by the Weierstrass representation in terms of two  holomorphic functions $h,k: M \rightarrow \mathbb{C}$. Then $f$ is elastic with free boundary if and only if for every local parametrization $\gamma:[a,b]\to\partial M$ of the boundary of $M$ we have
\begin{align}
0 &= \frac{1}{i}\sigma_\lambda(\gamma') = \lambda\left( 1-\frac{1}{|f_z|}\right) f_z \gamma' -f_{\bar{z}}\overline{\gamma'} \nonumber \\
&=   \lambda\left( \frac{1}{2}h\overline{h} - \frac{1}{1+\lambda} \right) \frac{h}{\overline{h}} \gamma' - \left( \overline{k'}-\frac{\lambda}{1+\lambda}\frac{H\overline{h'}}{\overline{h}^2}\right) \overline{\gamma'}. 
\label{freebd}
\end{align}

\subsection{Elastic strip with free boundary}
We want to find elastic equilibria of an annulus obtained by gluing two opposite sides of a rectangle. Here $M$ is not exactly a planar domain but at least a compact 2-dimensional manifold with boundary:
\begin{equation*}
M=[x_1,x_2]\times \mathbb{R}/_{2\pi\mathbb{Z}}.
\end{equation*}
In view of the periodic boundary conditions we make the following Ansatz for the Weierstrass data $h,k$: We choose $c>0, a=\alpha+i\beta \in \mathbb{C}, n\in\mathbb{N}$ and set
\begin{align*}
h(z) &= ce^{\frac{nz}{2}}\\ k(z)&= ae^{-nz}.
\end{align*}
\begin{figure}[h]
        \includegraphics[width=\textwidth]{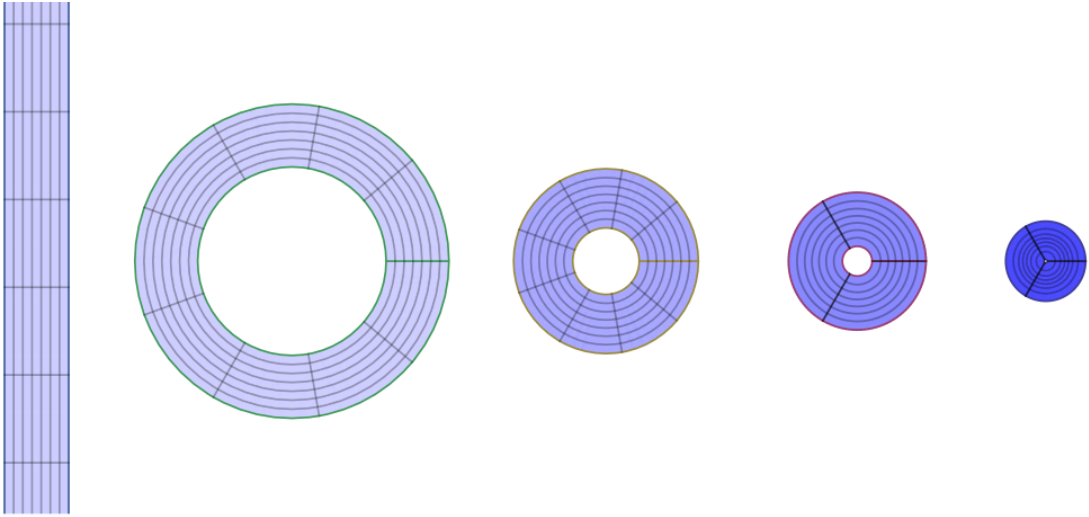}
        \caption{The rectangle $M:=\left[ \log\left(\frac{3}{4}\right) ,\log\left( \frac{5}{4}\right) \right] \times [0,2\pi]$ (only half of is shown in the picture) is bended elastically to an annulus with free boundary and winding number 1,2,3 or 6 respectively.}
        \label{picstripannuli}
\end{figure}
Suitable functions $H,G$ corresponding to this $h$ are
\begin{align*}
H(z)&=\frac{2c}{n}e^{\frac{nz}{2}} \\G(z)&= \frac{c^2}{n}e^{nz}
\end{align*}
and for $f$ we obtain
\begin{align*}
f(x,y) &= \frac{1}{2}\left( G + \frac{2\lambda}{1+\lambda}\frac{H}{\overline{h}}\right)  + \overline{k} \nonumber\\
&= \left( \frac{2\lambda}{(1+\lambda)n} + \frac{c^2}{2n}e^{nx} + (\alpha - i \beta)e^{-nx} \right)e^{iny}. 
\end{align*}
$f(M)$ is an annulus that winds around the origin $n$ times. By (\ref{freebd}) the boundary will be free if for every local parametrization $\gamma$ of $\partial M$
\begin{align*}
0 &=   \lambda\left( \frac{1}{2}h\overline{h} - \frac{1}{1+\lambda} \right) \frac{h}{\overline{h}} \gamma' - \left( \overline{k'}-\frac{\lambda}{1+\lambda}\frac{H\overline{h'}}{\overline{h}^2}\right) \overline{\gamma'} \nonumber \\
&= \lambda \left( \frac{c^2}{2}e^{nx}-\frac{1}{1+\lambda}\right) e^{i ny} \gamma'-\left[ (-\alpha + i \beta)ne^{n(-x+i y)} - \frac{\lambda}{1 + \lambda} \frac{e^{\frac{n(x+iy)}{2}}e^{\frac{n(x-iy)}{2}}}{e^{n(x-iy)}}\right] \overline{\gamma'} \nonumber \\
&= \lambda \left( \frac{c^2}{2}e^{nx}-\frac{1}{1+\lambda}\right) e^{i ny} \gamma'-\left[ (-\alpha + i \beta)ne^{-nx} - \frac{\lambda}{1 + \lambda} \right]e^{i ny} \overline{\gamma'}.
\end{align*}
The boundary components of $M$ are parallel to the y-axis, therefore $\gamma'=\pm i$ and we see that $\beta$ must be zero. Moreover, both for $x=x_1$ and $x=x_2$ we must have
\begin{align*}
0 &= \lambda \left( \frac{c^2}{2}e^{nx}-\frac{1}{1+\lambda}\right)-\alpha n e^{-nx} - \frac{\lambda}{1+\lambda} \nonumber \\
 &= \frac{\lambda c^2}{2}e^{nx} - \alpha ne^{-nx}-\frac{2\lambda}{1+\lambda}.
\end{align*} 
With $u:=e^{nx}$ this is a quadratic equation:
\begin{equation*}
0=u^2-\frac{4}{(1+\lambda)c^2}u-\frac{2\alpha n}{\lambda c^2} = \left(u-\frac{2}{(1+\lambda) c^2} \right)^2-\left( \left(\frac{2}{(1+\lambda)c^2} \right)^2+\frac{2\alpha n}{\lambda c^2} \right).  
\end{equation*}
We are interested in the case where this equation has two real roots, i.e. where there exists $b > 0$ such that:
\begin{equation*}
b^2=\left(\frac{2}{(1+\lambda)c^2} \right)^2+\frac{2\alpha n}{\lambda c^2}.
\end{equation*}
Then
\begin{align*}
x_1= \frac{1}{n} \log\left( \frac{2}{(1+\lambda)c^2}- b\right)\\
x_2= \frac{1}{n} \log\left( \frac{2}{(1+\lambda)c^2}+ b\right)
\end{align*}
and we have the following elastic annulus with free boundary: 
\begin{align*}
f:\left[\frac{1}{n} \log\left( \frac{2}{(1+\lambda)c^2} - b\right) ,\frac{1}{n} \log\left( \frac{2}{(1+\lambda)c^2} + b\right) \right] \times \mathbb{R} \setminus 2\pi \rightarrow \mathbb{R}^2 \nonumber, \\
f(x,y)= \left( \frac{2\lambda}{(1+\lambda)n} + \frac{c^2}{2n}e^{nx} + \left( \frac{\lambda c^2 b^2}{2n}-\frac{2\lambda}{(1+\lambda)^2nc^2}\right) e^{-nx} \right)e^{iny}. 
\end{align*}

\subsection{Elastic deformations with free boundary of a standard annulus}
The case of a standard annulus
\begin{equation*}
M=\{z\in \mathbb{C}\,|\, r_1\leq |z| \leq r_2\}.
\end{equation*}
yields other explicit elastic equilibria with free boundary. For the Weierstrass data $h,k$ we use the ansatz
\begin{align*}
h(z) &=c \,z^n\\ k(z) &= a \,z^{-2n-1}
\end{align*}
where $c>0$, $a \in \mathbb{C}$ and $2n \in \mathbb{N}$. We choose the integration constants in the corresponding functions $H,G$ as
\begin{align*}
H(z) &=\frac{c\,z^{n+1}}{n+1} \\ G(z) &= c^2 \, \frac{z^{2n+1}}{2n+1}
\end{align*}
and obtain
\begin{align}
f(z) &= \frac{1}{2}\left( G + \frac{2\lambda}{1+\lambda}\frac{H}{\overline{h}}\right) + \overline{k} \nonumber \\
&= \frac{c^2}{4n+2}z^{2n+1} + \frac{\lambda}{(1+\lambda)(n+1)}\frac{z^{n+1}}{\bar{z}^n} + \overline{a} \bar{z}^{-2n-1} \nonumber \\
&= \frac{z^{2n+1}}{|z|^{2n+1}}\left(\frac{c^2}{4n+2} |z|^{2n+1} + \frac{\lambda}{(1+\lambda)(n+1)}|z| + \overline{a}|z|^{-2n-1}\right). 
\label{f annu}
\end{align}
$f(M)$ is an annulus that winds $2n+1$ times around the origin. The boundary curves of $M$ are parametrized by $\gamma_{1/2}(t)=r_{1/2}e^{it}$. With (\ref{freebd}) the boundary of $f(M)$ is free if:
\begin{align*}
0 &=   \lambda\left( \frac{1}{2}h\overline{h} - \frac{1}{1+\lambda} \right) \frac{h}{\overline{h}} \gamma' - \left( \overline{k'}-\frac{\lambda}{1+\lambda}\frac{H\overline{h'}}{\overline{h}^2}\right) \overline{\gamma'} \\
&= \lambda\left( \frac{c^2}{2}|z|^{2n} - \frac{1}{1+\lambda} \right) \frac{z^n}{\bar{z}^n} \gamma'  - \left( -\overline{a}(2n+1)\bar{z}^{-2n-2}-\frac{\lambda}{1+\lambda}\frac{n}{n+1}\frac{z^{n+1}\bar{z}^{n-1}}{\bar{z}^{2n}}\right) \overline{\gamma'}.
\end{align*} 
Again we see that $a$ must be a real number $\alpha$ and
\begin{align}
0 &= \lambda\left( \frac{c^2}{2}r^{2n} - \frac{1}{1+\lambda} \right) -\alpha (2n+1)r^{-2n-2} -\frac{\lambda}{1+\lambda}\frac{n}{n+1} \nonumber \\
 0 &= r^{4n+2} -\frac{4n+2}{(1+\lambda)(n+1)c^2}r^{2n+2}-\frac{\alpha(4n+2)}{c^2\lambda} \label{r annulus}.
\end{align}
For all pairs of radii $0<r_1<r_2$ there always exists $\alpha \in \mathbb{R}$ and $c>0$ such that (\ref{r annulus}) is satisfied for $r=r_1$ and $r=r_2$. This can be seen by using the substitution $u:=\frac{1}{c^2}$ and $v:=\frac{\alpha}{c^2}$. The real numbers $u$ and $v$ have to solve the following linear system:
\begin{align}
r_2^{4n+2} &= \frac{4n+2}{(1+\lambda)(n+1)}r_2^{2n+2}u+\frac{4n+2}{\lambda}v, \nonumber \\
r_1^{4n+2} &= \frac{4n+2}{(1+\lambda)(n+1)}r_1^{2n+2}u+\frac{4n+2}{\lambda}v.
\label{lgs}
\end{align}
This system has a unique solution $(u,v)$ with $u\neq 0$ for all $0<r_1<r_2$ because the corresponding determinant is not zero:
\begin{equation}
(r_2^{2n+2}-r_1^{2n+2})\frac{4n+2}{(1+\lambda)(n+1)}\frac{4n+2}{\lambda} = (r_2^{2n+2}-r_1^{2n+2})\frac{(4n+2)^2}{(1+\lambda)(n+1)\lambda}\neq 0.
\end{equation}
Solving (\ref{lgs}) we obtain for $c$ and $\alpha$:
\begin{align}
c &= \sqrt{\frac{(4n+2)\left( r_2^{2n+2}-r_1^{2n+2}\right) }{(1+n)(1+\lambda)\left( r_2^{4n+2}-r_1^{4n+2}\right) }}, \nonumber\\
\alpha &= \frac{\lambda\, r_2^{2n+2}r_1^{2n+2} \left( r_2^{2n}-r_1^{2n}\right)}{(1+n)(1+\lambda)\left( r_1^{4n+2}-r_2^{4n+2}\right)}.
\label{c and alpha}
\end{align}

\begin{figure}[h]
        \includegraphics[width=\textwidth]{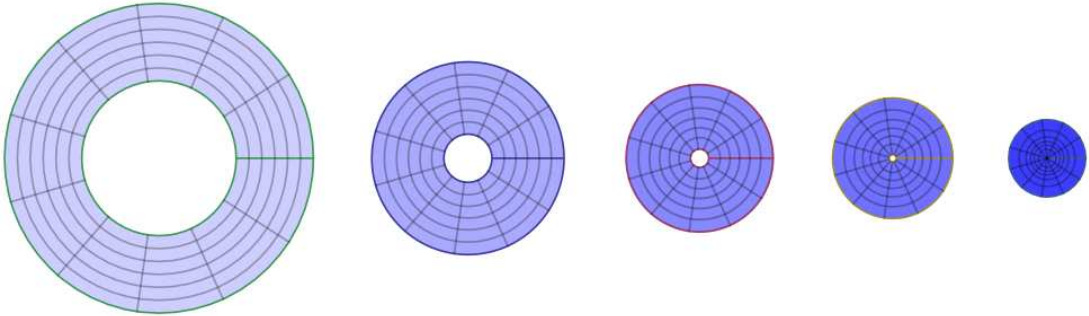}
        \caption{The annulus with radii $r_1=1$ and $r_2=2$ (left) is coiled up by an elastic deformation with free boundary. Its winding number is 2, 3, 4 and 7 respectively, so in the four pictures on the right the blue area is covered multiple times. For $n=7$ the hole of the wound up annulus becomes very small.}
        \label{picannuli}
   \end{figure}

\section{Uniqueness under free boundary conditions}
In this section we return to the general elastic energies $E_V$. We will prove that in the absence of boundary conditions and on a simply connected planar domain the only stable elastic maps are orientation preserving euclidean motions.

This result in fact also holds for ``planar domains with self-overlap'': Let $M$ be a compact connected and simply connected 2-manifold with boundary and $z: M \to \mathbb{R}^2 = \mathbb{C}$ an immersion. As in (\ref{splitting df}), the differential $df$ of any map $f: M \to \mathbb{C}$ can be uniquely decomposed as:
\begin{equation*}
df = f_z dz + f_{\bar z} d{\bar z}.
\end{equation*}
Then as in (\ref{EV}) we define
\begin{equation}
E_V(f) = \frac{1}{2} \int_M V(|f_{z}|^2) + |f_{\bar{z}}|^2. \label{EV2}
\end{equation}
Here the integral is taken with respect to the volume form $\frac{i}{2}dz \wedge d{\bar z}$ induced on $M$ by the immersion $z$.
\begin{theorem}
Let $f: M \to \mathbb{R}^2$ be an orientation-preserving immersion that is a critical point for $E_V$
with respect to all variations of $f$. Suppose that on all of $M$ we have
\begin{equation}
\label{V'}
1+V'(|f_z|^2) > 0.
\end{equation}
Then $f$ is an orientation preserving euclidean motion.
\end{theorem}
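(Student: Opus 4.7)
My plan is to first extract from the criticality under all variations two pieces of data. By the first variation formula
\[
\dot E_V(f,\dot f) \;=\; \tfrac{1}{2}\int_M \langle\dot f, d\sigma\rangle \;-\; \tfrac{1}{2}\int_{\partial M}\langle\dot f,\sigma\rangle,
\]
applied to all $\dot f$ (not only those vanishing on the boundary), we get $d\sigma = 0$ in the interior (equivalently, $g:=(1+V'(|f_z|^2))f_z$ is holomorphic, by \prpref{eulerV}) together with the free-boundary condition $\sigma|_{T\partial M}=0$. Since $\sigma$ is closed on the simply connected $M$, $\sigma=dF$ for some $\mathbb{C}$-valued function $F$; the tangential vanishing of $\sigma$ makes $F$ locally constant on $\partial M$, so after subtracting a constant $F|_{\partial M}=0$. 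Reading off components from $\sigma=i(-V'(|f_z|^2)f_z\,dz+f_{\bar z}\,d\bar z)$ gives $F_z=-iV'(|f_z|^2)f_z$ and $F_{\bar z}=if_{\bar z}$, and the direct computation $(f+iF)_{\bar z}=f_{\bar z}+i(if_{\bar z})=0$ shows that $G:=f+iF$ is holomorphic on $M$, with $G|_{\partial M}=f|_{\partial M}$.

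The crux is to conclude $F\equiv 0$. Since $1+V'(|f_z|^2)>0$, \prpref{prop minimizer} makes $f$ a \emph{strict} minimizer of $E_V$ among almost smooth maps sharing its boundary values. As $G$ is such a map, $F\not\equiv 0$ would force $E_V(f)<E_V(G)$. To derive a contradiction I pair this strict inequality with the area identity $\mbox{area}(f(M))=\mbox{area}(G(M))$ (which follows from $f|_{\partial M}=G|_{\partial M}$ by Stokes); combined with (\ref{E+A}) it yields
\[
E_V(G)-E_V(f) \;=\; \tfrac{1}{2}\int_M\bigl[U(|g|^2)-U(|f_z|^2)\bigr]\,dA, \qquad U(x):=V(x)+x,
\]
where $U'=1+V'>0$. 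Applying strict convexity of $V$ in both directions, together with the identity $\int_M V'(|f_z|^2)|f_z|^2+|f_{\bar z}|^2\,dA=0$ (from the variation $\dot f=f$, where no boundary term survives because $\sigma|_{T\partial M}=0$) and the identity $\int_M V'(|f_z|^2)^2|f_z|^2-|f_{\bar z}|^2\,dA=0$ (from $\int_M\det(dF)=0$, since $F|_{\partial M}=0$), the resulting lower and upper bounds for $E_V(G)-E_V(f)$ can be squeezed together, forcing $V'(|f_z|^2)\equiv 0$ on $M$. The dilation identity then gives $f_{\bar z}\equiv 0$; together these yield $F_z\equiv F_{\bar z}\equiv 0$, so $F$ is constant, and $F|_{\partial M}=0$ gives $F\equiv 0$.

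With $F\equiv 0$ we have $f=G$ holomorphic, so $f_{\bar z}\equiv 0$. The free-boundary condition $|f_{\bar z}|=|V'(|f_z|^2)|\,|f_z|$ on $\partial M$ then forces $V'(|f_z|^2)=0$ on $\partial M$, i.e.\ $|f_z|^2\equiv 1$ on $\partial M$. Since $f_z=g$ is a nonvanishing holomorphic function on $M$, $\log|f_z|$ is harmonic, so the maximum principle extends $|f_z|\equiv 1$ to all of $M$. A holomorphic function of constant modulus is constant, so $f_z\equiv e^{i\theta}$ for some $\theta\in\mathbb{R}$, and $f(z)=e^{i\theta}z+c$ is an orientation preserving Euclidean motion.

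The main obstacle is the squeeze argument showing $V'(|f_z|^2)\equiv 0$. Neither the strict-minimizer inequality alone nor the $U$-identity alone forces this: one needs to apply strict convexity to both $V(|g|^2)-V(|f_z|^2)$ and $V(|f_z|^2)-V(|g|^2)$ and carefully reconcile the resulting bounds with the dilation and area identities. An alternative route, which may be simpler in practice, is a maximum-principle argument on an auxiliary subharmonic function built from $F$ (such as $|F|^2$ or $\mbox{Re}(\bar{G}F)$), exploiting that $F|_{\partial M}=0$ and that the Laplacian of such a function can be computed explicitly in terms of $g$ and the Euler--Lagrange equation.
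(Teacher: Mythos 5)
Your setup is sound and several of your intermediate facts are correct: criticality under all variations does give both $d\sigma=0$ and $\sigma|_{T\partial M}=0$; the potential $F$ with $dF=\sigma$, $F|_{\partial M}=0$ exists on the simply connected $M$; the observation that $G:=f+iF$ is holomorphic with $G_z=(1+V'(|f_z|^2))f_z=g$ is a nice one; and the dilation identity $\int_M V'(|f_z|^2)|f_z|^2+|f_{\bar z}|^2=0$ and the area identity $\int_M V'(|f_z|^2)^2|f_z|^2-|f_{\bar z}|^2=0$ are both valid. The endgame (once $V'(|f_z|^2)\equiv 0$ and $f_{\bar z}\equiv 0$ are known) is also fine, if more roundabout than necessary, since $F\equiv 0$ already forces $V'(|f_z|^2)\equiv 0$ pointwise.

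However, the proof has a genuine gap exactly where you flag it: the ``squeeze'' that is supposed to force $V'(|f_z|^2)\equiv 0$ is asserted, not proved, and it is not clear it can be carried out. Writing $x=|f_z|^2$, $v=V'(x)$, $y=|g|^2=(1+v)^2x$, the two sides of the convexity inequality give
$(1+v)\,v(2+v)\,x \leq U(y)-U(x) \leq (1+V'(y))\,v(2+v)\,x$,
and your two integral identities only combine to $\int_M v(1+v)x=0$, which does not control either bound because of the extra factors $(2+v)$ and $V'(y)=V'((1+v)^2x)$ evaluated at a \emph{different} argument than $v$. The minimizer inequality from Proposition~\ref{prop minimizer} gives a \emph{lower} bound $E_V(G)-E_V(f)\geq\frac12\int_M(1+v)v^2x>0$ when $v\not\equiv 0$, so to reach a contradiction you need a matching \emph{upper} bound $E_V(G)-E_V(f)\leq 0$, and nothing in your identities supplies it. The alternative maximum-principle route on $|F|^2$ is likewise only a suggestion. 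The paper avoids all of this by testing criticality against one explicit variation, namely $\dot f_z=-g\log g$, $\dot f_{\bar z}=0$ (well defined because $g$ is nonvanishing on the simply connected $M$); its first variation is $\int_M -V'(x)(1+V'(x))\log\bigl((1+V'(x))|f_z|\bigr)|f_z|^2$, whose integrand is strictly negative wherever $|f_z|\neq 1$ by the sign of $V'$ on either side of $1$. Criticality then forces $|f_z|\equiv 1$ directly, after which $f_z=g$ is holomorphic of constant modulus, hence constant, and the free-boundary condition kills the antiholomorphic function $f_{\bar z}$. To salvage your approach you would need to actually produce the missing upper bound, or replace the energy comparison by such a sign-definite test variation.
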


\begin{proof}
By Proposition \ref{eulerV} the function
\begin{equation}
g = (1+V'(|f_{z}|^2)) f_z \label{g}
\end{equation}
is holomorphic with respect to $z$. 

Since $f$ is an orientation-preserving immersion, $f_z$ has no zeros. Using this and (\ref{V'}) we see that also $g$ has no zeros and since $M$ is simply connected we can define $\log g$ globally on $M$. Moreover, there is a function $\dot{f}: M \to \mathbb{C}$ such that:
\begin{align}
\dot{f}_z &= -g \log g, \nonumber \\
\dot{f}_{\bar z} &= 0.
\end{align}
If we use $\dot{f}$ as an infinitesimal variation of $f$, the corresponding variation of the energy is
\begin{align}
\label{Edot}
\dot{E}_V
&= \int_M V'(|f_{z}|^2)\langle \dot{f}_z, f_z \rangle + \langle \dot{f}_{\bar{z}},f_{\bar{z}} \rangle \\
&= \int_M -V'(|f_{z}|^2)\langle(1+V'(|f_{z}|^2)) \log ((1+V'(|f_{z}|^2))f_{z})f_{z},f_z\rangle \nonumber \\
&= \int_M -V'(|f_{z}|^2)(1+V'(|f_{z}|^2)) \re \left[ \log ((1+V'(|f_{z}|^2))f_{z}) \right]  |f_{z}|^2 \nonumber \\
&= \int_M -V'(|f_{z}|^2)(1+V'(|f_{z}|^2)) \log ((1+V'(|f_{z}|^2))|f_{z}|) |f_{z}|^2. \nonumber \label{Edot}
\end{align}

Due to the fact that $V:(0,\infty)\rightarrow \mathbb{R}$ is smooth, strictly convex and takes its only minimum at $x=1$ we have
\begin{align*}
 V'(x) &< 0 \quad \text{for} \quad x \in (0,1) \\
 V'(x) &> 0 \quad \text{for} \quad x \in (1,\infty).
\end{align*}
Thus for $0<u<1$ we have:
\begin{align*}
V'(u) &< 0 \\
(1+V'(u))u &< 1 \\
\log((1+V'(u))u) &< 0.
\end{align*}
Therefore, the integrand of (\ref{Edot}) is negative at all points of $M$ where $|f_z| < 1$. By a similar argument, the same is true for $|f_z| > 1$. This means that everywhere we have $|f_z|=1$, because otherwise the variation of $E_V$ would be negative. This would contradict our assumption that $f$ is a critical point of $E_V$.

The holomorphicity of $g$ in (\ref{g}) then implies that also $f_z$ is a holomorphic function. Because of $|f_z| =1$ we conclude that $f_z$ is constant. Moreover, $|f_z|=1$ and $V'(1)=0$ the 1-form $\sigma$ defined in (\ref{sigma}) now has the form
\begin{equation}
\sigma = - i f_{\bar{z}} d\bar{z}.
\end{equation}
We know $f_{z\bar{z}} = 0$ and therefore $f_{\bar{z}}$ is an antiholomorphic function. By the left equality in (\ref{Edot1}) criticality of $f$ with respect to all variations implies that the $\sigma$ has to vanish on vectors tangent to $\partial M$. So the antiholomorphic function $f_{\bar{z}}$ vanishes on $\partial M$ and thus has to vanish identically. This means that $f$ is a holomorphic map whose derivative $f_z$ is constant and has unit norm. In other words, $f$ is an orientation preserving euclidean motion.
\end{proof}
\begin{figure}[h]
    \centering
     \includegraphics[width=\textwidth]{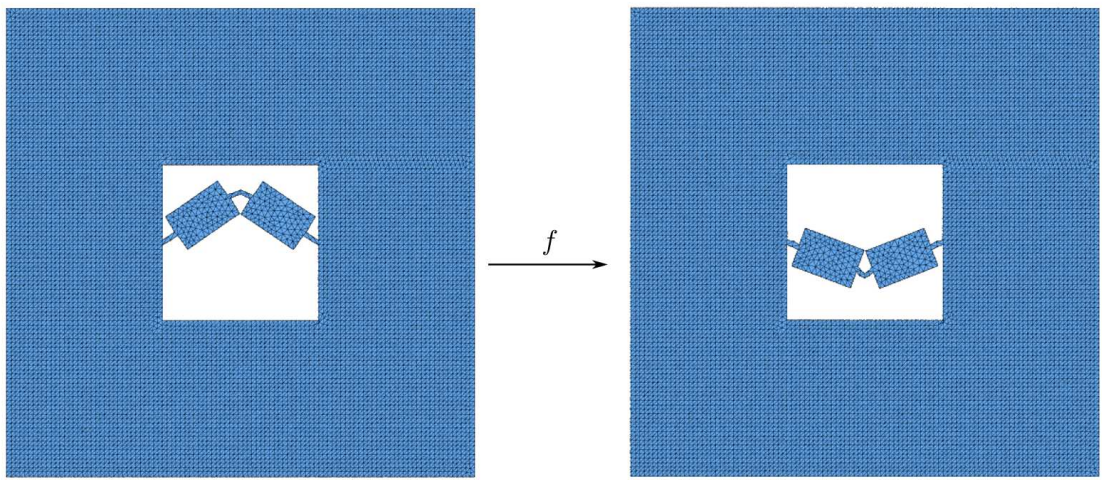}
    \caption{A non-trivial elastic deformation with free boundary of a domain $M$ that is not simply connected}\label{gegenbsp}
\end{figure}

Note that in dimensions greater than 2 there are counterexamples to the above theorem, and also the condition that $M$ is simply connected cannot be dropped:
\begin{itemize}
\item  A thickened half-sphere in three dimensions can be turned inside out to yield a
non-trivial stable equilibrium.
\item In \figref{picannuli} an annulus is elastically deformed to another annulus with higher winding number and free boundary. Here the image homotopy class gets changed to obtain an non-trivial stable equilibrium.
\item An example for an elastic deformation with boundary that preserves the homotopy class was constructed numerically with Houdini and is shown in \figref{gegenbsp}.
\end{itemize}

\bibliographystyle{abbrv}
\bibliography{dhlb}{}

\end{document}